\newtheorem{theorem}{Theorem}[section]
\newtheorem*{theorem*}{Theorem}
\newtheorem*{lemma*}{Lemma}
\newtheorem{proposition}[theorem]{Proposition}
\newtheorem*{proposition*}{Proposition}
\newtheorem{corollary}[theorem]{Corollary}
\newtheorem*{corollary*}{Corollary}
\newtheorem*{claim*}{Claim}
\newtheorem*{fact*}{Fact}
\newtheorem{conjecture}[theorem]{Conjecture}
\newtheorem*{conjecture*}{Conjecture}
\theoremstyle{definition}
\newtheorem{definition}[theorem]{Definition}
\newtheorem*{definition*}{Definition}
\newtheorem*{example*}{Example}
\newtheorem{remark}[theorem]{Remark}
\newtheorem*{remark*}{Remark}
\newtheorem*{question*}{Question}
\newtheorem*{assumption*}{Assumption}
\numberwithin{equation}{section}
\DeclareMathOperator{\ad}{ad}
\DeclareMathOperator{\id}{id}
\newcommand{\C}{{\mathbb C}}
\newcommand{\R}{{\mathbb R}}
\newcommand{\w}{{\, \wedge \,}}
\newcommand{\pa}{{\partial}}
\newcommand{\bpa}{{\overline{\partial}}}
\newcommand{\iu}{{\mathrm{i}}}
\newcommand{\br}[2]{{[\,{#1},{#2}\,]}}
\newcommand{\conj}[1]{{\overline{#1}}}
\newcommand{\bmu}{{\overline{\mu}}}
\newcommand{\wed}[1]{{\epsilon_{#1}}}
\newcommand{\weda}[1]{{\epsilon^*_{#1}}}
\address{graduate school of mathematical sciences, the university of tokyo, 3-8-1 komaba, meguro, tokyo 153-8914, japan.}
\email{shuho@ms.u-tokyo.ac.jp}
\subjclass[2020]{32Q60; 53C55; 53D35}
\begin{document}
\title{The hard Lefschetz duality for locally conformally almost K\"{a}hler manifolds}
\author{Shuho Kanda}
\date{}
\maketitle
\begin{abstract}
    We prove the hard Lefschetz duality for 
locally conformally almost K\"{a}hler manifolds. 
This is a generalization of that 
for almost K\"{a}hler manifolds studied by Cirici and Wilson. 
We generalize the K\"{a}hler identities to prove the duality. 
Based on the result, 
we introduce the hard Lefschetz condition for 
locally conformally symplectic manifolds. 
As examples, 
we give solvmanifolds which do not satisfy the 
hard Lefschetz condition. 
\end{abstract}

\section{Introduction}

\subsection{Background and motivation}
    K\"{a}hler manifolds are a special class of symplectic manifolds. 
    The first example of a symplectic manifold which does not admit a K\"{a}hler structure 
    was constructed by William Thurston in 1976 \cite{MR0402764}. 
    Since then,  
    methods for determining whether a symplectic manifold admits a K\"{a}hler structure 
    have been actively studied.  
    
    One of the most basic methods is to see the ring structure of 
    the de Rham cohomology. 
    We focus here on the \emph{hard Lefschetz condition} (HLC, for short). 
    Let $(M,\omega)$ be a $2n$-dimensional symplectic manifold. 
    We say that $(M,\omega)$ satisfies the HLC 
    if the $(n-j)$-th exterior power of the symplectic form $\omega^{n-j}$ 
    induces an isomorphism between the cohomologies 
    $H^j(M)$ and $H^{2n-j}(M)$
    for all $1 \le j \le n$. 
    It is known that all compact K\"{a}hler manifolds satisfy the HLC 
    (see \cite{MR2093043}, \cite{MR0507725} ). 
    This means that a compact symplectic manifold $(M,\omega)$
    which does not satisfy the HLC 
    never admits a K\"{a}hler structure
    $(M,J,g)$ such that the associated fundamental form 
    $g(J\cdot,\cdot)$ is $\omega$. 

    We can see how a compact symplectic manifold fails to 
    satisfy the HLC by using an additional structure of it. 
    Every compact symplectic manifold $(M,\omega)$ 
    has at least one almost K\"{a}hler structure 
    $(M,J,g)$ such that the associated fundamental form is $\omega$
    (we call the structure K\"{a}hler if $J$ is integrable). 
    Define the space of harmonic forms of bidegree $(p,q)$ by letting
    \[
    \mathcal{H}^{p,q} \coloneqq \ker(\Delta_d) \cap \mathcal{A}^{p,q}, 
    \]
    where $\mathcal{A}^{p,q}$ denotes the space of $(p,q)$-forms. 
    In \cite{MR4110721}, 
    Cirici and Wilson studied the properties of 
    the space $\mathcal{H}^{p,q}$. 
    According to the generalized hard Lefschetz duality, 
    one of the results in \cite{MR4110721}, 
    it is shown that the maps 
    \begin{equation}\label{Ldual}
    L^{n-(p+q)} \colon \mathcal{H}^{p,q} 
    \stackrel{\sim}{\longrightarrow} \mathcal{H}^{n-q,n-p}
    \end{equation}
    are isomorphisms for all $p,q$, 
    where $L$ denotes an operator defined by wedging a form with $\omega$. 
    In general, 
    the inclusion 
    \begin{equation}\label{inc}
        \bigoplus_{p+q=j} \mathcal{H}^{p,q} \subset \mathcal{H}^j
    \end{equation}
    is proper, 
    where $\mathcal{H}^j$ denotes 
    a space of harmonic $j$-forms. 
    If $(M,J,g)$ is K\"{a}hler, 
    i.e. $J$ is integrable, 
    it turns out that the inclusion (\ref{inc}) is equal by 
    the K\"{a}hler identities. 
    Hence all K\"{a}hler manifolds satisfy the HLC. 
    In this sense, 
    we can say that the gap of the inclusion (\ref{inc}) measures to 
    what extent is the HLC holds. 

    We consider a class of \emph{locally conformally symplectic} (LCS) manifolds, 
    which is a generalization of symplectic manifolds. 
    Let $M$ be a manifold and $\omega$ be a nondegenerate $2$-form on $M$. 
    We call the pair $(M,\omega)$ LCS if, 
    for all points of $M$, 
    there exists a neighborhood $U$ of the point and 
    real-valued function $f$ on $U$ such that 
    $(U,e^{-f}\omega)$ is symplectic. 
    This condition is equivalent to saying that 
    there exists a closed $1$-form $\theta$ on $M$, 
    which is called the \emph{Lee form}, 
    such that $d\omega = \theta \w \omega$. 
    As the similar generalization of a K\"{a}hler manifold, 
    a \emph{locally conformally K\"{a}hler} (LCK) manifold is defined. 
    Typical examples of LCK manifolds are 
    Hopf manifolds, Inoue surfaces, and Thurston's manifolds,  
    which are basic examples of complex manifolds which do not admit any K\"{a}hler structures (see \cite{MR1481969}). 
    The class of LCS and LCK manifolds has attracted 
    much interests among geometers in recent years. 

    As in the case of symplectic manifolds, 
    it is natural to ask whether an LCS manifold admits 
    an LCK structure. 
    This existence problem is explicitly proposed by Ornea and Verbitsky in
    \cite{MR2796645}. 
    A few examples are discovered only in \cite{MR3724467} and \cite{MR3473558}. 
    One of the difficulties of the problem is that,  
    in contrast to the case of K\"{a}hler structures, 
    there are no known topological 
    obstructions for the existence of LCK structures. 
    See \cite{ornea2023principles} for details. 

    One of the clues to the problem is the HLC. 
    In recent years, 
    generalizations of the HLC to LCS manifolds have been studied. 
    In \cite{MR3746516}, 
    a generalization of the HLC for LCS is proposed by using 
    the \emph{twisted de Rham cohomology}
    (see also \cite{MR3318013}, 
    where the symplectic cohomology on LCS manifolds is studied 
    by using the twisted differential). 

    There are two main purpose of this paper. 
    First, 
    to reformulate the definition of the HLC in \cite{MR3746516}. 
    Second, 
    as a generalization of the results in \cite{MR4110721}, 
    to develop the hard Lefschetz duality for  
    \emph{locally conformally almost K\"{a}hler} (LCaK) 
    manifolds, 
    which are non-integral version of LCK manifolds. 

\subsection{The results of this paper}
    We formulate the HLC for LCS manifolds as follows: 
    \begin{definition}[The hard Lefschetz condition for locally conformally symplectic manifolds, Definition \ref{HLCforLCS}]
        Let $(M,\omega)$ be a $2n$-dimensional 
        compact locally conformally symplectic manifold with the Lee form $\theta$. 
        $(M,\omega)$ satisfies the
        \emph{hard Lefschetz condition} 
        if the maps
        \begin{equation}\label{Liso}
        [L]^{n-j} \colon H^j_{-(n-j)/2}(M) 
        \to H^{2n-j}_{(n-j)/2}(M)
        \end{equation}
        are isomorphisms for all $1 \le j \le n$. 
    \end{definition}

    In the above definition, 
    we write $H^j_k(M)$ for the cohomology group of a complex 
    composed of differential forms and 
    the \emph{twisted differential} $d_{k}$ defined by 
    $d_{k}\alpha \coloneqq d\alpha-k\theta \w \alpha$ for $k \in \R$. 
    Using this cohomology, 
    which is called the twisted de Rham cohomology, 
    we can define the \emph{Lefschetz map} $[L]^{n-j}$ as follows: 
    \begin{equation}\label{AOT}
    [L]^{n-j} \colon H^j_{k}(M) 
    \to H^{2n-j}_{k+n-j}(M), 
    \quad
    [\alpha] \mapsto [\omega^{n-j} \w \alpha]. 
    \end{equation}
    According to the results in \cite{MR4376841}, 
    it turns out that 
    the space $H^j_{t}(M)$ is isomorphic to the space $H^{2n-j}_{-t}(M)$ 
    for all $t \in \R$ by the Hodge theory. 
    This is why we set the parameter $k$ in (\ref{AOT}) to $-(n-j)/2$ so that 
    the relation $k=-(k+n-j)$ holds, 
    and adopt the above definition. 

    We write $\Lambda$ for the adjoint operator of $L$. 
    Denote by $\mathcal{H}^{p,q}_k$ the space of $d_k$-harmonic forms of 
    bidegree $(p,q)$. 
    The space $\mathcal{H}^{j}_{k,\mathcal{J}}$ 
    is a space of $d_k$-harmonic forms $\alpha$ such that 
    $\mathcal{J}\alpha$ is also $d_k$-harmonic, 
    where $\mathcal{J}$ is an operator induced by $J$. 

    Using the above spaces and the formulation of the HLC, 
    we generalize the hard Lefschetz duality (\ref{Ldual}) as follows: 

    \begin{theorem}
    [The hard Lefschetz duality for locally conformally almost K\"{a}hler manifolds, 
    Theorem \ref{main}]
        Let $(M,J,g)$ be a $2n$-dimensional 
        compact locally conformally almost K\"{a}hler manifold with the Lee form $\theta$. 
        Then the operators 
        $\{ L, \Lambda, H\coloneqq \br{L}{\Lambda}\}$ 
        define finite 
        dimensional representations of $\mathfrak{sl}(2)$ on 
        \begin{equation}\label{slharm0}
        \bigoplus_{p,q \ge 0} \mathcal{H}^{p,q}_{-(n-(p+q))/2} 
        \quad \text{and} \quad
        \bigoplus_{j \ge 0} \mathcal{H}^j_{-(n-j)/2, \mathcal{J}}. 
        \end{equation}
        Furthermore, 
        for all $0 \le p,q$ such that $j=p+q\le n$, 
        \[
        L^{n-j} \colon \mathcal{H}^{p,q}_{-(n-j)/2} 
        \, \stackrel{\sim}{\longrightarrow} \, 
        \mathcal{H}^{n-q,n-p}_{(n-j)/2}
        \]
        and 
        \[
        L^{n-j} \colon \mathcal{H}^j_{-(n-j)/2, \mathcal{J}} 
        \, \stackrel{\sim}{\longrightarrow} \,  
        \mathcal{H}^{2n-j}_{(n-j)/2, \mathcal{J}}
        \]
        are isomorphisms. 
    \end{theorem}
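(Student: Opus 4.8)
The plan is to reduce the whole statement to the finite-dimensional representation theory of $\mathfrak{sl}(2)$. The operators $L$, $\Lambda$ and $H=\br{L}{\Lambda}$ satisfy the relations $\br{H}{L}=2L$, $\br{H}{\Lambda}=-2\Lambda$, $\br{L}{\Lambda}=H$ as endomorphisms of the full exterior algebra at each point of $M$; this is pure symplectic linear algebra and uses neither $J$, nor $\theta$, nor harmonicity. In the same way $H$ acts on $j$-forms as multiplication by $j-n$. Hence the only genuine content of the theorem is (i) that $L$ and $\Lambda$ preserve the two harmonic spaces appearing in (\ref{slharm0}), so that these become $\mathfrak{sl}(2)$-modules, and (ii) the extraction of the stated isomorphisms from these modules. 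Step (i) is where the work lies; step (ii) is then formal.

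For step (i), the key observation is that $L$ is a chain map between twisted complexes of consecutive weight. Since $d\omega=\theta\w\omega$, a direct computation gives $d_{k+1}\circ L=L\circ d_{k}$. Feeding this into the generalized K\"{a}hler identities established above — which control the twisted commutator $\br{L}{d_k^{*}}$ through a $J$-twisted analogue of the operator $d^{c}$ — I would deduce the intertwining relation
\[
L\circ\Delta_{d_k}=\Delta_{d_{k+1}}\circ L
\]
for the twisted Laplacians; its adjoint, which follows from $\Lambda=L^{*}$ and the self-adjointness of each $\Delta_{d_k}$, is $\Lambda\circ\Delta_{d_k}=\Delta_{d_{k-1}}\circ\Lambda$. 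Therefore $L$ carries $d_k$-harmonic forms to $d_{k+1}$-harmonic forms and $\Lambda$ carries $d_k$-harmonic forms to $d_{k-1}$-harmonic forms. As $L$ raises bidegree by $(1,1)$ and $\Lambda$ lowers it by $(1,1)$, sending pure-bidegree forms to pure-bidegree forms, this refines to
\[
L\colon\mathcal{H}^{p,q}_{k}\to\mathcal{H}^{p+1,q+1}_{k+1},
\qquad
\Lambda\colon\mathcal{H}^{p,q}_{k}\to\mathcal{H}^{p-1,q-1}_{k-1}.
\]
The weight parameter in (\ref{slharm0}) is slaved to the total degree precisely so that $-(n-(p+q))/2$ increases by $1$ under $(p,q)\mapsto(p+1,q+1)$; consequently $L$, $\Lambda$ and $H$ preserve $\bigoplus_{p,q}\mathcal{H}^{p,q}_{-(n-(p+q))/2}$, which is thus an $\mathfrak{sl}(2)$-module. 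For the second space I would use that $\mathcal{J}$, being induced by $J$, commutes with $L$ and $\Lambda$ because $\omega$ is $J$-invariant of bidegree $(1,1)$; hence if $\alpha$ and $\mathcal{J}\alpha$ are $d_k$-harmonic, so are $L\alpha$ and $\mathcal{J}(L\alpha)=L(\mathcal{J}\alpha)$, and with the same slaving $\bigoplus_{j}\mathcal{H}^{j}_{-(n-j)/2,\mathcal{J}}$ is an $\mathfrak{sl}(2)$-module as well.

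Step (ii) is now formal. Since $M$ is compact every harmonic space is finite-dimensional, so both modules in (\ref{slharm0}) are finite-dimensional $\mathfrak{sl}(2)$-representations. On $\mathcal{H}^{p,q}_{-(n-j)/2}$ with $j=p+q$ the element $H$ acts by $j-n=-(n-j)$, so this is the weight space of weight $-(n-j)$, while $\mathcal{H}^{n-q,n-p}_{(n-j)/2}$ is the weight space of weight $+(n-j)$. By the structure theory of finite-dimensional $\mathfrak{sl}(2)$-representations, the raising operator $L^{m}$ restricts to an isomorphism from the weight $-m$ space onto the weight $m$ space for every $m\ge 0$. Taking $m=n-j\ge 0$, and noting that $L^{n-j}$ sends bidegree $(p,q)$ to $(n-q,n-p)$ and the parameter $-(n-j)/2$ to $(n-j)/2$, yields the first isomorphism; the identical argument on the second module, using that $\mathcal{J}$ preserves weight spaces, yields the isomorphism on $\mathcal{H}^{j}_{-(n-j)/2,\mathcal{J}}$.

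The main obstacle is the passage from the generalized K\"{a}hler identities to the twisted-Laplacian intertwining $L\circ\Delta_{d_k}=\Delta_{d_{k+1}}\circ L$, which mirrors the classical identity $\{d,d^{c}\}=0$ underlying $\br{L}{\Delta_d}=0$ in the K\"{a}hler case. Here two new sources of terms appear: the non-integrability of $J$ contributes the components $\mu$ and $\bar\mu$ of $d$, of bidegree $(2,-1)$ and $(-1,2)$, and the conformal correction $-k\,\theta\w(\,\cdot\,)$ in $d_k$ contributes further lower-order pieces. Showing that all of these cross terms cancel — rather than the representation-theoretic bookkeeping, which is routine — is where the real difficulty resides.
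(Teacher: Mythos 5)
Your step (ii) and the representation-theoretic bookkeeping are fine, and the chain-map identity $d_{k+1}\circ L=L\circ d_k$ is indeed identity (i) of Proposition \ref{KidforLCaK}. But the decisive step of your argument --- the operator identity $L\circ\Delta_{k}=\Delta_{k+1}\circ L$ for the twisted Laplacians --- is false, and the difficulty you defer at the end (``showing that all of these cross terms cancel'') is not a computation to be carried out: the cross terms genuinely do not cancel. If that identity held, $L$ would carry the \emph{full} harmonic space $\mathcal{H}^j_k$ into $\mathcal{H}^{j+2}_{k+1}$ for every $k$, and your dimension count would then make $L^{n-j}\colon\mathcal{H}^j_{-(n-j)/2}\to\mathcal{H}^{2n-j}_{(n-j)/2}$ an isomorphism on all harmonic forms, forcing every compact LCaK manifold to satisfy the HLC --- contradicting Section \ref{secExamples}. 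Concretely, on $M_2$ (so $n=3$, $j=1$, $k=-(n-j)/2=-1$, exactly the weight appearing in the theorem) the form $x^3$ lies in $\mathcal{H}^1_{-1}$, yet $Lx^3=-x^{13}y^1-x^{23}y^2$ has the nonzero $d_0$-exact component $-x^{13}y^1$ and so is not $d_0$-harmonic. The failure is already present in the almost K\"ahler case ($\theta=0$): there $\br{L}{\Delta_d}=dd^c+d^cd$, whose $(2,0)$-component is $-4\iu\,\pa^2=4\iu(\mu\bpa+\bpa\mu)$, nonzero in general when $J$ is not integrable. This is precisely why the duality is stated for $\mathcal{H}^{p,q}_k$ and $\mathcal{H}^j_{k,\mathcal{J}}$ and not for $\mathcal{H}^j_k$.

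The paper's route (Proposition \ref{LHinH}) avoids any Laplacian identity and argues pointwise on the distinguished subspaces. For $\alpha\in\mathcal{H}^j_{k,\mathcal{J}}$, identity (i) gives $d_{k+1}L\alpha=Ld_k\alpha=0$, while identity (ii) gives $d^*_{k+1}L\alpha=Ld^*_k\alpha-d^c_{-n-k+j}\alpha=-\mathcal{J}^{-1}d_{-n-k+j}(\mathcal{J}\alpha)$. This last term vanishes for two reasons that your argument never invokes: the weight condition $n+2k-j=0$ forces $-n-k+j=k$, and membership in $\mathcal{H}^j_{k,\mathcal{J}}$ guarantees that $\mathcal{J}\alpha$ is $d_k$-closed. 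Thus the $\mathcal{J}$-condition (and, for the bigraded spaces, purity of bidegree, which implies it) is exactly the hypothesis that kills the error term $d^c$; it cannot be appended afterwards merely by observing that $\mathcal{J}$ commutes with $L$. Once step (i) is replaced by this argument, your conclusion of the isomorphisms --- whether via the weight-space structure of finite-dimensional $\mathfrak{sl}(2)$-modules or, as in the paper, via injectivity of $L^{n-j}$ on forms together with the Hodge-star equality of dimensions from Proposition \ref{dual} --- goes through.
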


    To prove this theorem, 
    we establish the K\"{a}hler identities for LCaK manifolds
    (Proposition \ref{KidforLCaK}). 
    Furthermore, 
    we can see the importance of the relation $k=-(k+n-j)$
    in the proof. 
    
    Using this theorem, 
    we can see how the ranks of the Lefschetz maps in 
    (\ref{Liso}) drop. 
    As examples, 
    we give a family of LCaK manifolds which do not satisfy the HLC 
    in Section \ref{secExamples}. 
    For each integer $m \ge 2$, 
    we construct a $(2m+2)$-dimensional compact solvable Lie group $M_m$ and 
    a left-invariant LCaK structure of it. 
    The manifolds are the same as 
    those constructed in Section $4$ of \cite{MR3763412}. 
    On manifolds $M_2$, $M_3$, and $M_4$, 
    we give the basis of the spaces in (\ref{Liso}) and (\ref{slharm0}) 
    and compute the Lefschetz maps. 
    
    Finally, 
    we propose a conjecture on the HLC for LCK manifolds. 
    Recall that the HLC for symplectic manifolds 
    is always satisfied on K\"{a}hler manifolds. 
    According to this fact, 
    the following conjecture is naturally suggested: 
    
    \begin{conjecture}[Conjecture \ref{HLTforLCK}]
        Let $(M,J,g)$ be a $2n$-dimensional compact 
        LCK manifold with the Lee form $\theta$. 
        Then, 
        $(M,J,g)$ satisfies the HLC. 
    \end{conjecture}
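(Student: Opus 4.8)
The plan is to separate the statement into a pointwise algebraic input and an analytic one, and then to invoke the representation theory of $\mathfrak{sl}(2)$. The algebraic input is classical and needs neither integrability nor the conformal hypothesis: the operators $L$, $\Lambda=L^{*}$ and $H=\br{L}{\Lambda}$ obey the Weil relations $\br{H}{L}=2L$, $\br{H}{\Lambda}=-2\Lambda$, $\br{L}{\Lambda}=H$ on all forms, $H$ acts on $\mathcal{A}^{j}$ as the scalar $j-n$, and $L$ (resp.\ $\Lambda$) is strictly of bidegree $(1,1)$ (resp.\ $(-1,-1)$) because $\omega\in\mathcal{A}^{1,1}$. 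Since $L$ and $\Lambda$ preserve $p-q$, they commute with the operator $\mathcal{J}$, which acts on $\mathcal{A}^{p,q}$ as $i^{p-q}$. The entire geometric content therefore lies in showing that $L$, $\Lambda$ and $H$ restrict to the two direct sums in (\ref{slharm0}); granting this, they define $\mathfrak{sl}(2)$-representations there and the isomorphisms are forced.

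The crux is the interaction of $L$ with the twisted differential. From the defining identity $d\omega=\theta\wedge\omega$ a direct computation gives
\[
d_{k+1}\circ L=L\circ d_{k},
\qquad\text{hence dually}\qquad
\Lambda\circ d_{k+1}^{*}=d_{k}^{*}\circ\Lambda,
\]
so that $L$ raises the twisting parameter by one and $\Lambda$ lowers it by one. Substituting these into the K\"{a}hler identities for LCaK manifolds (Proposition \ref{KidforLCaK}), which express the twisted codifferentials through $\br{\Lambda}{\,\cdot\,}$ and the type components of $d_{k}$, I would deduce the parameter-shifted commutation of the twisted Laplacians,
\[
\Delta_{d_{k+1}}\circ L=L\circ\Delta_{d_{k}},
\qquad
\Delta_{d_{k-1}}\circ\Lambda=\Lambda\circ\Delta_{d_{k}}.
\]
Together with the strict bidegrees this yields $L\colon\mathcal{H}^{p,q}_{k}\to\mathcal{H}^{p+1,q+1}_{k+1}$ and $\Lambda\colon\mathcal{H}^{p,q}_{k}\to\mathcal{H}^{p-1,q-1}_{k-1}$; since raising $p+q$ by two and $k$ by one preserves the condition $k=-(n-(p+q))/2$, the operators stabilize the first direct sum in (\ref{slharm0}). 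I expect this to be the main obstacle: in the LCaK setting $d_{k}$ carries type components of bidegree $(2,-1)$ and $(-1,2)$ beyond $\partial$ and $\bar\partial$, and the Lee form supplies the parameter dependence, so assembling the full collection of twisted identities into the clean Laplacian intertwining is where the real work sits.

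For the second direct sum I would use $\br{\mathcal{J}}{L}=\br{\mathcal{J}}{\Lambda}=0$: if $\alpha$ and $\mathcal{J}\alpha$ are both $d_{k}$-harmonic, then $L\alpha$ and $\mathcal{J}L\alpha=L\mathcal{J}\alpha$ are both $d_{k+1}$-harmonic, whence $L$ and $\Lambda$ preserve $\bigoplus_{j}\mathcal{H}^{j}_{-(n-j)/2,\mathcal{J}}$ and act on it through $\mathfrak{sl}(2)$. Because $M$ is compact and $\Delta_{d_{k}}$ is elliptic (its principal symbol is unchanged by the zeroth-order twist), each $\mathcal{H}^{p,q}_{k}$ and $\mathcal{H}^{j}_{k,\mathcal{J}}$ is finite dimensional, so both representations are finite dimensional.

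It remains to read off the isomorphisms. On the degree-$j$ part $H$ acts as $j-n$, i.e.\ as the $\mathfrak{sl}(2)$-weight $-(n-j)$ for $j\le n$, and the standard fact that $L^{m}$ restricts to an isomorphism from the weight $-m$ space onto the weight $+m$ space in any finite dimensional $\mathfrak{sl}(2)$-representation, applied with $m=n-j$, sends the degree-$j$ summand onto the degree-$(2n-j)$ summand. Tracking the bidegree shift $(p,q)\mapsto(n-q,n-p)$ and the parameter shift $-(n-j)/2\mapsto(n-j)/2$ gives
\[
L^{n-j}\colon\mathcal{H}^{p,q}_{-(n-j)/2}\stackrel{\sim}{\longrightarrow}\mathcal{H}^{n-q,n-p}_{(n-j)/2},
\]
and the same weight argument on the $\mathcal{J}$-compatible summands gives the companion isomorphism for $\mathcal{H}^{j}_{-(n-j)/2,\mathcal{J}}$. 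The balancing relation $k=-(k+n-j)$ stressed in the introduction is exactly what makes the source and target parameters $\mp(n-j)/2$ opposite, so that this $\mathfrak{sl}(2)$ picture is compatible with the Hodge duality $H^{j}_{t}\cong H^{2n-j}_{-t}$ of \cite{MR4376841}.
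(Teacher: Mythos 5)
This statement is Conjecture \ref{HLTforLCK}, which the paper explicitly leaves open (``The above conjecture is still open''), so there is no proof in the paper to compare against; what you have written does not close it. The decisive error is the claimed intertwining $\Delta_{d_{k+1}}\circ L = L\circ \Delta_{d_k}$. If you actually substitute the twisted K\"ahler identities, you get
\[
\Delta_{k+1}L - L\Delta_{k} \;=\; -\bigl(d^{c}_{\,\cdot}\,d_{k} + d_{k+1}\,d^{c}_{\,\cdot}\bigr),
\]
with the parameter of $d^{c}$ shifted by $-n-k+j$ as in Proposition \ref{KidforLCaK}(ii), and this anticommutator does not vanish: in the untwisted K\"ahler case the analogous cancellation uses $\partial^{2}=\bar{\partial}^{2}=0$, whereas here the twist parameters on the two factors differ, so the cross terms survive even when $J$ is integrable. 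This is precisely why the paper can only prove the weaker Proposition \ref{LHinH}: from identity (ii) one gets $d^{*}_{k+1}L\alpha = -\mathcal{J}^{-1}d_{k}(\mathcal{J}\alpha)$ when $n+2k-j=0$ and $d^{*}_{k}\alpha=0$, and this vanishes only if $\mathcal{J}\alpha$ is \emph{also} $d_{k}$-closed --- hence the restriction to $\mathcal{H}^{j}_{k,\mathcal{J}}$ rather than all of $\mathcal{H}^{j}_{k}$. A quick sanity check confirms something must be wrong: your argument nowhere uses integrability of $J$, so if it were valid it would prove the HLC for every compact LCaK (in particular every compact almost K\"ahler, i.e.\ symplectic) manifold, contradicting Thurston's example and the paper's own solvmanifolds $M_{m}$ in Section \ref{secExamples}.

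Even setting that aside, what your outline targets is Theorem \ref{main}, not the conjecture. The HLC is a statement about the Lefschetz map on the full twisted cohomology $H^{j}_{-(n-j)/2}(M)\cong\mathcal{H}^{j}_{-(n-j)/2}$, whereas the $\mathfrak{sl}(2)$ picture you describe lives on the subspaces $\bigoplus\mathcal{H}^{p,q}_{k}\subset\mathcal{H}^{j}_{k,\mathcal{J}}\subset\mathcal{H}^{j}_{k}$, and these inclusions are proper in general (the paper's examples exhibit harmonic classes outside $\mathcal{H}^{j}_{k,\mathcal{J}}$ on which $[L]^{n-j}$ has nontrivial kernel). To prove the conjecture you would need the additional, genuinely open input that integrability of $J$ forces $\mathcal{H}^{j}_{-(n-j)/2,\mathcal{J}}=\mathcal{H}^{j}_{-(n-j)/2}$, i.e.\ that $\mathcal{J}$ preserves twisted harmonicity at the critical weight --- the twisted analogue of $\Delta_{d}=2\Delta_{\bar{\partial}}$ --- and the parameter shifts in the LCK K\"ahler identities are exactly what obstructs the classical derivation of that fact.
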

    
    The above conjecture is still open. 

\subsection{Organization of this paper}
    This paper is organized as follows: 
    \begin{itemize}
        \item In Section \ref{secPreliminaries} 
        we collect preliminaries on LCaK structures, 
        the twisted cohomology, 
        operators on almost complex manifolds, 
        and the Lefschetz decomposition. 
        \item In Section \ref{secKidforLCaK} 
        we write down K\"{a}hler identities for LCaK manifolds 
        and prove them (Proposition \ref{KidforLCaK}). 
        \item In Section \ref{secHLC}
        we define some subspaces of twisted harmonic forms. 
        Then we prove the main theorem (Theorem \ref{main}) and 
        define the HLC for LCS manifolds (Definition \ref{HLCforLCS}). 
        \item In Section \ref{secExamples} 
        we give examples of LCaK manifolds which do not satisfy the HLC 
        and compute the spaces defined in the previous sections 
        with their basises. 
    \end{itemize}

\section*{Acknowledgements}
I am deeply grateful to my supervisor Prof. Kengo Hirachi 
for his enormous support and helpful advice. 
I am also grateful to Prof. Hisashi Kasuya 
for helpful discussions. 
Lastly, 
I also would like to thank Jin Miyazama for 
several helpful comments. 
This research was supported by Forefront Physics and Mathematics Program to
Drive Transformation (FoPM), a World-leading Innovative Graduate Study (WINGS)
Program, the University of Tokyo. 

\section{Preliminaries}\label{secPreliminaries}
    In this section we review the basics 
    of locally conformally almost K\"{a}hler geometry, 
    the twisted cohomology, 
    and the Lefschetz decomposition. 
    Throughout this paper, 
    we only consider smooth connected manifolds 
    without boundary which have even real dimension $2n \ge 4$. 
    We always assume that functions and differential forms 
    are smooth. 
    Furthermore, 
    the vector spaces defined in this paper, 
    such as the space of harmonic forms, 
    the de Rham cohomology group, 
    etc, 
    are always assumed to be complex vector spaces, 
    not real vector spaces. 
    
\subsection{Locally conformally almost K\"{a}hler manifolds}
    Let $M$ be a smooth manifold of dimension $2n \ge 4$ and
    $\omega$ be a nondegenerate $2$-form on $M$, 
    i.e. 
    $\omega^n \neq 0$ at all points of $M$. 
    The form $\omega$ is said to be \emph{conformally symplectic} (CS, for short) on $M$ 
    if there exists a real-valued function $f$ 
    such that $e^{-f} \omega$ is symplectic on $M$, 
    i.e. 
    $d(e^{-f} \omega)=0$. 
    The form $\omega$ is said to be \emph{locally conformally symplectic} (LCS) on $M$ 
    if there exists an open covering $\{U_i\}_{i \in I}$ of $M$ 
    such that $\omega|_{U_i}$ are CS on $U_i$ for all $i \in I$. 
    Let $f_i$ be a real-valued function on $U_i$ 
    such that $e^{-f_i} \omega$ is symplectic on $U_i$. 
    Then the condition that the form $e^{-f_i} \omega$ is symplectic is equivalent to 
    $d\omega=df_i \w \omega$. 
    Since $\omega$ is nondegenerate, 
    $df_i \w \omega=df_j \w \omega$ implies 
    $df_i=df_j$ on $U_i \cap U_j$. 
    So the $1$-forms $\{df_i\}_{i \in I}$ are glued together. 
    Thus we obtain the following proposition 
    which makes the above definition simpler: 
    
    \begin{proposition}
        A nondegenerate $2$-form $\omega$ on $M$ is LCS if and only if 
        there exists a closed $1$-form $\theta$ on $M$ 
        such that $d\omega = \theta \w \omega$. 
        Furthermore, 
        $\omega$ is CS if and only if $\theta$ is exact. 
    \end{proposition}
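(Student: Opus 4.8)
The plan is to establish the equivalence by two implications and then read off the conformally symplectic refinement, in each case using the Leibniz computation $d(e^{-f}\omega)=e^{-f}(d\omega-df\w\omega)$ together with one linear-algebraic fact: since $\omega$ is nondegenerate and $2n\ge 4$, the pointwise map $\alpha\mapsto\alpha\w\omega$ from $1$-forms to $3$-forms is injective. This last fact is exactly the injectivity of the Lefschetz operator in bottom degree, and it is the only non-formal ingredient in the argument.

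For the forward direction, the preceding discussion already supplies the local data: on each $U_i$ the symplectic condition $d(e^{-f_i}\omega)=0$ is equivalent to $d\omega=df_i\w\omega$. First I would use the injectivity above to conclude that $(df_i-df_j)\w\omega=0$ forces $df_i=df_j$ on $U_i\cap U_j$, so that the local forms $df_i$ patch to a global $1$-form $\theta$ with $\theta|_{U_i}=df_i$. Then $\theta$ is closed, because it is locally exact, and $d\omega=\theta\w\omega$ holds on all of $M$ since it holds on each $U_i$.

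For the converse, I would begin with a closed $1$-form $\theta$ satisfying $d\omega=\theta\w\omega$ and pass to a cover of $M$ by contractible open sets $U_i$. On each $U_i$ the Poincar\'e lemma gives a real-valued function $f_i$ with $\theta|_{U_i}=df_i$, and the Leibniz computation then yields $d(e^{-f_i}\omega)=e^{-f_i}(\theta\w\omega-df_i\w\omega)=0$. Since $e^{-f_i}>0$, the form $e^{-f_i}\omega$ is nondegenerate, hence symplectic on $U_i$, so $\omega$ is LCS.

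For the final clause I would first note that injectivity makes $\theta$ unique. If $\omega$ is CS with a global $f$ satisfying $d(e^{-f}\omega)=0$, the same computation gives $d\omega=df\w\omega$, whence $\theta=df$ is exact by uniqueness; conversely, if $\theta=df$ is exact then $d(e^{-f}\omega)=e^{-f}(\theta\w\omega-df\w\omega)=0$ globally, so $\omega$ is CS. The proof is essentially formal once the bottom-degree injectivity of $\alpha\mapsto\alpha\w\omega$ is in hand, and verifying that fact (via nondegeneracy of $\omega$ and $2n\ge 4$) is the only step requiring genuine attention.
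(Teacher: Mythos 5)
Your proposal is correct and follows essentially the same route as the paper, which proves the statement in the discussion immediately preceding it: rewriting $d(e^{-f_i}\omega)=0$ as $d\omega=df_i\w\omega$, using nondegeneracy of $\omega$ (injectivity of $\wed{}$ with $\omega$ on $1$-forms for $2n\ge 4$) to glue the $df_i$ into a global closed $\theta$, with the converse and the CS clause following formally. You merely spell out the converse via the Poincar\'e lemma and the uniqueness of $\theta$, which the paper leaves implicit.
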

    
    The $1$-form $\theta$ is determined by $\omega$, 
    and we call it the \emph{Lee form} of $\omega$. 
    In these situations we call the couple 
    $(M,\omega)$ an \emph{LCS manifold} (\emph{CS manifold}) with the Lee form $\theta$. 

    If $M$ admits an almost complex structure 
    $J$ and a hermitian metric $g$ such that
    the associated fundamental form 
    $\omega(\cdot \, , \cdot) \coloneqq g(J\cdot \, ,\cdot)$ 
    is LCS (CS), 
    we call the triple $(M,J,g)$ a 
    \emph{locally conformally (conformally) almost K\"{a}hler} (LCaK (CaK)). 
    Furthermore, 
    we call the triple $(M,J,g)$ a 
    \emph{locally conformally (conformally) K\"{a}hler} (LCK (CK)) 
    if the almost complex structure $J$ is integrable.  
    The relation among the classes of manifolds 
    is illustrated in Figure \ref{class}. 

    \begin{figure}[h]
    \centering
    \caption{} \label{class}
    \begin{tikzpicture}
        \draw (0,0) rectangle (5.5,3);
        \draw (0.4,0.5) rectangle (3,1.9);
        \draw [-{Classical TikZ Rightarrow[length=2mm]}] (6,1.5) -- (8.5,1.5);
        \draw (9,0) rectangle (14.5,3);
        \draw (9.4,0.5) rectangle (12,1.9);
        \draw (12.6,1.2) circle [x radius=1.8, y radius=0.6];
        \node at (2.75,3.2) {LCS};
        \node at (1.6,2.1) {CS};
        \node at (11.75,3.2) {LCaK};
        \node at (10.6,2.1) {CaK};
        \node at (12.8,2) {LCK};
        \node at (11.5,1.2) {CK};
        \node at (7.1,2.3) [align=center] 
        {add \\ almost hermitian \\ structures};
    \end{tikzpicture}
    \end{figure}

    \vspace{5mm}
    
    Lastly, 
    we note that for every nondegenerate $2$-forms $\omega$ on $M$ 
    there exists an almost complex structure $J$ and a hermitian metric $g$ such that 
    the fundamental form of $(M,J,g)$ coincides with $\omega$. 

\subsection{The twisted de Rham cohomology}
    The twisted de Rham cohomology is also called 
    the Morse-Novikov cohomology, 
    the Lichnerowicz cohomology, 
    or the adapted cohomology. 
    Its study was initiated by Novikov 
    \cite{Novikov1982}, 
    and developed by 
    Guedira and Lichnerowicz 
    \cite{Gudira1984GomtrieDA}. 
    For an introduction to the twisted de Rham cohomology, 
    see \cite{MR2501742}. 
    Here we give the definitions and some basic properties of them. 
    
    Let $M$ be a $2n$-dimensional manifold. 
    Denote by $\mathcal{A}^j$ the space of complex-valued differential $j$-forms on $M$, 
    and $H^j(M)$ the $j$-th cohomology group 
    of the complex $(\mathcal{A}^{\bullet}\, , d)$,  
    where $d$ is the exterior derivative. 
    Let $\theta$ be a closed $1$-form. 
    We define the \emph{twisted differential} $d_{\theta}$ as 
    \[
    d_{\theta} \coloneqq d - \wed{\theta} 
    \colon \mathcal{A}^{\bullet} \to \mathcal{A}^{\bullet+1},
    \]
    where $\wed{\alpha}$ denotes an operator defined by 
    $\cdot \mapsto \alpha \w \cdot$ for a form $\alpha$. 
    Similar to the usual differential, 
    we have $d_{\theta} d_{\theta}=0$. 
    Denote by $H^j_{\theta}(M)$ the $j$-th cohomology group 
    of the complex $(\mathcal{A}^{\bullet}\, , d_{\theta})$, 
    and we call it the 
    \emph{twisted de Rham cohomology} or simply the 
    \emph{twisted cohomology}. 
    The cohomology group $H^j_{\theta}(M)$ depends only on the cohomology class
    of $[\theta]$ in $H^1(M)$. 
    Indeed, 
    if $\theta^{'}=\theta+df$, 
    where $f$ is a function on $M$, 
    there exists an isomorphism of complexes:  
    \[
    e^f \colon (\mathcal{A}^{\bullet}\, , d_{\theta}) 
    \stackrel{\sim}{\longrightarrow} 
    (\mathcal{A}^{\bullet}\, , d_{\theta^{'}}).
    \]
    In particular, 
    the twisted cohomology 
    $H^j_{\theta}(M)$ is isomorphic to the 
    de Rham cohomology $H^j(M)$ if
    $\theta$ is exact. 

    Let $k \in \R$ be a real number. 
    We write $d_{k\theta}$ and $H^j_{k\theta}(M)$ as 
    $d_k$ and $H^j_k(M)$ respectively 
    when $\theta$ is clear. 
    Using these notation, 
    we have the following Leibniz rule:  
    \begin{equation}\label{Leibniz}
        d_{k+l}(\alpha \w \beta)=d_k \alpha \w \beta + 
        (-1)^{\deg{\alpha}} \alpha \w d_l \beta 
    \end{equation}
    for all $k,l \in \R$. 
    From this formula, 
    we have the following map induced by the wedge product: 
    \begin{equation}\label{wedge}
        \w \colon H^{j_1}_{k_1}(M) \times H^{j_2}_{k_2}(M) 
        \to H^{j_1+j_2}_{k_1+k_2}(M). 
    \end{equation}

    We consider the case where $M$ is compact, 
    orientable, and endowed with a Riemannian metric $g$. 
    Denote by $d_k^*$ the adjoint operator of $d_k$ 
    with respect to the metric $g$, 
    and we define the \emph{twisted laplacian}
    $\Delta_k \coloneqq d^*_kd_k+d_kd^*_k$. 
    Since the twisted laplacian $\Delta_k$ is elliptic, 
    the space of differential forms 
    $\mathcal{A}^{\bullet}$ has 
    the following Hodge decomposition: 
    \[
    \mathcal{A}^{\bullet} = 
    \mathcal{H}^{\bullet}_k \oplus
    d_k\mathcal{A}^{\bullet-1} \oplus
    d^*_k\mathcal{A}^{\bullet+1}, 
    \]
    where $\mathcal{H}^{\bullet}_k$ denotes the kernel of 
    $\Delta_k$, 
    which is called the space of \emph{twisted harmonic forms}. 
    As a consequence, 
    the natural map 
    $\mathcal{H}^{\bullet}_k \to H^{\bullet}_k(M)$
    defined by 
    $\alpha \mapsto [\alpha]$
    is an isomorphism. 
    We also have $\dim H^{\bullet}_k(M) < \infty$. 
    The Hodge duality for the twisted cohomologies is 
    stated as follows:
    \begin{proposition}
    [\cite{MR4376841}]
    \label{Hodge}
        In the above situation, 
        we have 
        \[
        *\Delta_k=\Delta_{-k}*
        \]
        for all $k \in \R$. 
        In particular, 
        the maps
        \[
        * \colon \mathcal{H}^j_k 
        \, \stackrel{\sim}{\longrightarrow} \, 
        \mathcal{H}^{2n-j}_{-k}
        \]
        are defined and isomorphisms for 
        all $0 \le j \le 2n$. 
   \end{proposition}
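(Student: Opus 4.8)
The plan is to reduce the identity $*\Delta_k = \Delta_{-k}*$ to two intertwining relations describing how the Hodge star $*$ conjugates $d_k$ and its adjoint $d_k^*$, and then to compose these. First I would record that, since $\wed{\theta}$ (wedging with $\theta$) has $L^2$-adjoint $\wed{\theta}^*$ equal to contraction with the metric dual $\theta^\sharp$, the adjoint of the twisted differential is $d_k^* = d^* - k\,\wed{\theta}^*$. This is the only place the adjoint structure enters, so everything reduces to understanding $d$, $d^*$, $\wed{\theta}$, and $\wed{\theta}^*$ separately under $*$.

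Next I would assemble the two building-block identities from standard Hodge theory on an oriented Riemannian $2n$-manifold. On $p$-forms one has $d^* = -*d*$, where the sign $(-1)^{2n(p+1)+1}=-1$ is forced by the even dimension, and the Hodge star exchanges exterior and interior multiplication by $\theta$ up to a degree-dependent sign, namely $*\,\wed{\theta} = (-1)^p\,\wed{\theta}^*\,*$ and $*\,\wed{\theta}^* = (-1)^{p+1}\,\wed{\theta}\,*$. Feeding these into $d_k = d - k\,\wed{\theta}$ and $d_k^* = d^* - k\,\wed{\theta}^*$ and collecting terms, the key point is that the $d$-part and the $\theta$-part acquire a common overall sign, so it factors out. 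Concretely I expect to obtain, on $p$-forms,
\[
* d_k = (-1)^{p+1}\, d_{-k}^*\, *, \qquad * d_k^* = (-1)^{p}\, d_{-k}\, *,
\]
where the flip $k \mapsto -k$ arises precisely because $*$ turns $-k\,\wed{\theta}$ into $+k\,\wed{\theta}^*$.

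With these two relations in hand, I would compute $*\Delta_k = *\,(d_k^* d_k + d_k d_k^*)$ by pushing $*$ through each factor, tracking the degree at which each operator acts. In the first term $*$ meets $d_k^*$ on $(p+1)$-forms and then $d_k$ on $p$-forms, contributing $(-1)^{p+1}(-1)^{p+1}=1$; in the second term $*$ meets $d_k$ on $(p-1)$-forms and then $d_k^*$ on $p$-forms, contributing $(-1)^{p}(-1)^{p}=1$. Hence both cross-signs cancel and I reach
\[
*\Delta_k = \bigl(d_{-k} d_{-k}^* + d_{-k}^* d_{-k}\bigr)\,* = \Delta_{-k}\,*.
\]

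Finally, for the ``in particular'' statement, the intertwining $*\Delta_k = \Delta_{-k}*$ immediately shows that $*$ sends $\mathcal{H}^j_k = \ker\Delta_k \cap \mathcal{A}^j$ into $\mathcal{H}^{2n-j}_{-k}$. To see it is an isomorphism I would apply the same relation with $(j,k)$ replaced by $(2n-j,-k)$ to obtain a map $* \colon \mathcal{H}^{2n-j}_{-k} \to \mathcal{H}^j_k$, and note that the composite $*\,*$ equals $(-1)^{j}\,\id$ on $\mathcal{H}^j_k$, hence is invertible; thus $*$ restricts to an isomorphism of harmonic spaces. I expect the main obstacle to be purely the sign bookkeeping: fixing the correct degree-dependent sign in the wedge/contraction duality $*\,\wed{\theta} = (-1)^p\,\wed{\theta}^*\,*$ and then verifying that, across the four compositions making up $\Delta_k$, these signs together with the $(-1)^{p+1}$ coming from the codifferential conspire to leave $\Delta_{-k}$ with coefficient $+1$ rather than an overall sign.
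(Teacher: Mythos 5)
Your argument is correct: the two intertwining relations $*d_k=(-1)^{p+1}d_{-k}^*\,*$ and $*d_k^*=(-1)^{p}d_{-k}\,*$ on $p$-forms both check out (the flip $k\mapsto -k$ arises exactly where you say, from $*$ exchanging $\wed{\theta}$ with its adjoint, and $**=(-1)^p$ together with $d^*=-*d*$ in even dimension gives the stated signs), and the degree bookkeeping across the four compositions in $\Delta_k$ cancels as you claim, so $*\Delta_k=\Delta_{-k}*$ and the harmonic-space isomorphism follows. The paper states this proposition only as a citation to \cite{MR4376841} and gives no proof of its own, so there is nothing to compare against; your computation is the standard one and correctly supplies the missing argument.
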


\subsection{Operators associated with an almost complex structure}

    Let $(M,J)$ be an almost complex manifold, 
    and
    \[
    \mathcal{A}^j 
    = \bigoplus_{p+q=j} \mathcal{A}^{p,q}
    \]
    be the bigraded decomposition of the space of $j$-forms 
    $\mathcal{A}^j$ with respect to $J$. 
    We fix a closed $1$-form $\theta$ on $M$ and $k \in \R$. 
    The twisted differential $d_k$ splits into 
    four components: 
    \[
    d_k=\pa_k+\bpa_k+\mu+\bmu, 
    \]
    where the bidegrees of the operators 
    $\pa_k$, $\bpa_k$, $\mu$ and $\bmu$ are 
    $(1,0)$, $(0,1)$, $(2,-1)$, and $(-1,2)$ respectively. 
    Then we have 
    $\pa_k=\pa-k\wed{\theta^{1,0}}$ and $\bpa_k=\bpa-k\wed{\theta^{0,1}}$,  
    where $\theta^{1,0}$ and $\theta^{0,1}$ 
    denote the $(1,0)$-part and $(0,1)$-part 
    of $\theta$ respectively. 
    Since the operator $\mu$ acts as zero on $0$-forms, 
    $\mu$ and $\bmu$ are linear over functions. 
    It is well known that $J$ is integrable if and only if $\mu=0$. 
    Splitting the equation $d_kd_k=0$ into bidegrees, 
    we obtain the following equations: 
    \begin{align*}
        \pa_k\pa_k &= -\mu\bpa_k-\bpa_k\mu, \\
        \pa_k\bpa_k+\bpa_k\pa_k &= -\mu\bmu-\bmu\mu, \\
        \mu\pa_k &= -\pa_k\mu, \\
        \mu\mu &= 0. 
    \end{align*}

    The almost complex structure $J$ induces the real isomorphism 
    \[
    \mathcal{J} \coloneqq \sum_{p+q=j} \iu^{p-q} \, \Pi^{p,q} 
    \colon \mathcal{A}^j \to \mathcal{A}^j, 
    \]
    where $\Pi^{p,q}$ denotes the projection 
    $\mathcal{A}^j \to \mathcal{A}^{p,q}$, 
    and $\iu$ denotes the imaginary unit. 
    Using this operator, 
    we define a real operator $d^c_k$ as 
    \[
    d^c_k \coloneqq \mathcal{J}^{-1}d_k\mathcal{J}. 
    \]
    By straightforward calculations, 
    we have
    \[
    d^c_k = -\iu(\pa_k-\bpa_k)+\iu(\mu-\bmu). 
    \]
    The \emph{anti-Lee form} $\theta^c$ is defined by 
    \[
    \theta^c \coloneqq -\mathcal{J}\theta =-\theta \circ J
    = -\iu(\theta^{1,0}-\theta^{0,1}). 
    \]
    Using this, 
    we can write 
    \[
    \mathcal{J}^{-1}\wed{\theta}\mathcal{J}=\wed{\theta^c}. 
    \]
    So we have 
    \begin{equation}\label{d^c_k}
        d^c_k = d^c - k \wed{\theta^c}.
    \end{equation}

\subsection{The Lefschetz decomposition on the space of forms}
\label{secLef}

    We review some basic facts about 
    the Lefschetz decomposition on the space of forms. 
    See e.g. 
    \cite{MR2093043}, 
    \cite{MR0507725}
    for details. 
    
    We consider an almost hermitian manifold $(M,J,g)$ with 
    the associated fundamental form $\omega$. 
    The \emph{Lefschetz map} is an operator defined by 
    wedging a form with $\omega$: 
    \[
    L \coloneqq \wed{\omega} \colon 
    \mathcal{A}^{p,q} \to \mathcal{A}^{p+1,q+1}, 
    \]
    and its formal adjoint operator $\Lambda$ is 
    \[
    \Lambda \coloneqq (-1)^{p+q}*L* \colon 
    \mathcal{A}^{p,q} \to \mathcal{A}^{p-1,q-1}. 
    \]
    It is well known that the operators
    $\{L, \Lambda, H \coloneqq \br{L}{\Lambda} \}$
    define a representation of $\mathfrak{sl}(2)$ on 
    \[
    \mathcal{A}^{\bullet}=\bigoplus_{j} \mathcal{A}^j. 
    \]
    The $\mathfrak{sl}(2)$-representation induces the 
    \emph{Lefschetz decomposition} 
    \[
    \mathcal{A}^{p,q}= 
    \bigoplus_{j \ge 0} L^j \mathcal{P}^{p-j,q-j}, 
    \]
    where $\mathcal{P}^{p-j,q-j}$ denotes 
    the space $\ker(\Lambda) \cap \mathcal{A}^{p-j,q-j}$. 
    From the decomposition, 
    the maps 
    \[
    L^j \colon \mathcal{A}^{p,q} \to \mathcal{A}^{p+j,q+j}
    \]
    turn out to be injective for all 
    $0 \le j \le n-(p+q)$. 
    In this paper, 
    we call the composite $L^j$ also the Lefschetz map.

\section{The K\"{a}hler identities for 
locally conformally almost K\"{a}hler manifolds}
\label{secKidforLCaK}

    Let $(M,J,g)$ be an almost hermitian manifold 
    and $\omega$ be the associated fundamental form. 
    In the case where $(M,J,g)$ is almost K\"{a}hler, 
    i.e. $\omega$ is closed, 
    there are remarkable  relations among operators which 
    are the so-called \emph{K\"{a}hler identities} 
    (the term ``K\"{a}hler identities'' is often used for  
    the identities in the case where 
    $J$ is integrable, however, 
    here we cite Weil's classical result 
    \cite{MR0111056} for the non-integrable cases. 
    See also \cite{MR4110721}. 
    The references for the integrable cases, 
    see e.g. 
    \cite{MR2093043}, \cite{MR0507725}). 

    In what follows, 
    we write $\delta^*$ for the formal adjoint operator of 
    an operator $\delta$. 
    It is well known that $d^*=-*d*$. 
    Splitting the formula into bidegree, 
    we have
    \[
    \bmu^*=-*\mu* \quad \text{and} \quad \bpa^*=-*\pa*. 
    \]

    \begin{proposition}
    [The K\"{a}hler identities for almost 
    K\"{a}hler manifolds,
    \cite{MR0111056}]
    \label{Kid}
        Let $(M,J,g)$ be an almost K\"{a}hler manifold with 
        the associated fundamental form $\omega$. 
        Then the following identities hold:  
        \begin{align*}
    \textnormal{(i)} \quad &\br{d}{L}=\br{\pa}{L}=\br{\bpa}{L}=0, &
    \textnormal{(i)*} \quad &\br{d^*}{\Lambda}=\br{\pa^*}{\Lambda}=\br{\bpa^*}{\Lambda}=0, \\
    \textnormal{(ii)} \quad &\br{d^*}{L}= -d^c, & 
    \textnormal{(ii)*} \quad &\br{d}{\Lambda}= -d^{c*}, \\
    &\br{\pa^*}{L}= -\iu\bpa, & 
    &\br{\pa}{\Lambda}= -\iu\bpa^*, \\ 
    &\br{\bpa^*}{L}= \iu\pa, & 
    &\br{\bpa}{\Lambda}= \iu\pa^*, \\
    \textnormal{(iii)} \quad &\br{\mu}{L}=\br{\bmu}{L}=0, & 
    \textnormal{(iii)*} \quad &\br{\mu^*}{\Lambda}=\br{\bmu^*}{\Lambda}=0, \\
    \textnormal{(iv)} \quad &\br{\mu^*}{L}= \iu\bmu, & 
    \textnormal{(iv)*} \quad &\br{\mu}{\Lambda}= \iu\bmu^*, \\ 
    &\br{\bmu^*}{L}= -\iu\mu, & 
    &\br{\bmu}{\Lambda}= -\iu\mu^*. 
\end{align*}
    \end{proposition}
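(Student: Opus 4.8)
The plan is to prove only the left-hand column (the identities involving $L$) and to deduce the right-hand column (those involving $\Lambda$) by passing to formal adjoints: since $\Lambda=L^*$, $\br{A}{B}^*=\br{B^*}{A^*}$, and $(\iu\,\cdot)^*=-\iu\,\cdot$, applying $(\,\cdot\,)^*$ to each left identity produces exactly its starred counterpart (for instance $\br{\pa^*}{L}=-\iu\bpa$ yields $\br{\Lambda}{\pa}=\iu\bpa^*$, i.e. $\br{\pa}{\Lambda}=-\iu\bpa^*$). I can economize further: complex conjugation is an antilinear symmetry of the structure that interchanges $\pa\leftrightarrow\bpa$ and $\mu\leftrightarrow\bmu$ and sends $\iu\mapsto-\iu$ while fixing the real operator $L$, and combined with the relations $\bpa^*=-*\pa*$, $\bmu^*=-*\mu*$ and $\Lambda=(-1)^{p+q}*L*$ it converts one member of each pair in (ii) and (iv) into the other. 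Hence it suffices to establish (i), (iii), and a single representative from each of (ii) and (iv).

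The identities (i) and (iii) I would dispose of immediately. Because $d$ is a derivation of the exterior algebra and its four bihomogeneous components $\pa,\bpa,\mu,\bmu$ have pairwise distinct bidegrees, each component is itself a derivation; consequently $\br{\delta}{L}=\wed{\delta\omega}$ for every $\delta\in\{\pa,\bpa,\mu,\bmu\}$. The almost K\"{a}hler hypothesis $d\omega=0$ forces each bidegree piece $\pa\omega,\bpa\omega,\mu\omega,\bmu\omega$ of $d\omega$ to vanish, so all four commutators are zero; this gives (i) and (iii) at once, together with $\br{d}{L}=0$.

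The substance of the proposition is (ii) and (iv), which together amount to the single K\"{a}hler identity $\br{d^*}{L}=-d^c$. Splitting $d^*=\pa^*+\bpa^*+\mu^*+\bmu^*$ and $-d^c=\iu(\pa-\bpa)-\iu(\mu-\bmu)$ into homogeneous parts, the four distinct bidegrees $(0,1)$, $(1,0)$, $(2,-1)$, $(-1,2)$ reproduce respectively $\br{\pa^*}{L}=-\iu\bpa$, $\br{\bpa^*}{L}=\iu\pa$, $\br{\bmu^*}{L}=-\iu\mu$, $\br{\mu^*}{L}=\iu\bmu$. The pieces (iv) involving the tensorial operators $\mu,\bmu$ are purely algebraic: each side is a bundle endomorphism of $\bigoplus_{p,q}\mathcal A^{p,q}$, so I would verify them fiberwise in a local unitary coframe from the standard commutation relations among $L$, $\Lambda$ and exterior/interior multiplication. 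For the first-order pieces (ii) I would instead show that $\br{\pa^*}{L}+\iu\bpa$ has vanishing principal symbol on any almost Hermitian manifold by the same pointwise Hermitian linear algebra (matching the symbol of $\br{\,\cdot\,}{L}$ applied to the contraction symbol of $\pa^*$ against that of $-\iu\bpa$), so that it reduces to a zeroth-order tensor built from the $1$-jet of $\omega$.

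The main obstacle is precisely the vanishing of this zeroth-order tensor. On a general almost Hermitian manifold it is a nonzero expression in $\nabla\omega$ (equivalently in the intrinsic torsion), and one must check that it is governed exactly by the totally antisymmetric part $d\omega$, so that the weak condition $d\omega=0$ — rather than the K\"{a}hler condition $\nabla\omega=0$ — already annihilates it. A clean way to organize this is to introduce the canonical Hermitian connection $\nabla$, for which $\nabla g=\nabla J=0$ and hence $\nabla\omega=0$, so that $\nabla$ commutes with $L$ and $\Lambda$; one then writes $d^*$ through $\nabla$ and its torsion and observes that the torsion contribution entering the $\pa,\bpa$-commutators is the one measured by $d\omega$, while the bidegree bookkeeping of the previous paragraph prevents the non-integrability tensor $\mu$ from leaking into (ii). Carrying out this torsion computation with the correct constants is the delicate step; once it is settled, everything else follows formally from the adjoint and conjugation symmetries recorded above.
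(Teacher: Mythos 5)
The paper does not actually prove this proposition: it is quoted as Weil's classical result \cite{MR0111056} and used as a black box (the paper's own contribution begins with Proposition \ref{KidforLCaK}, which is \emph{reduced} to the present statement by a local conformal change). So your argument has to stand on its own, and it does not quite: there is a genuine gap at the one place where the almost K\"ahler hypothesis must do real work. Your reductions are all correct and standard --- halving the list by adjoints ($\br{A}{B}^*=\br{B^*}{A^*}$) and by conjugation, observing that each bihomogeneous component $\delta$ of $d$ is a derivation so that $\br{\delta}{L}=\wed{\delta\omega}$ and (i), (iii) follow from $d\omega=0$ because the four pieces of $d\omega$ have distinct bidegrees, and noting that (ii) and (iv) are exactly the bidegree components of the single identity $\br{d^*}{L}=-d^c$. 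But that single identity is the entire analytic content of the proposition, and you do not prove it. You correctly identify that its principal-symbol part is pointwise Hermitian linear algebra and that the residual zeroth-order term is a tensor in the $1$-jet of $\omega$ which must be shown to be killed by $d\omega=0$ alone (rather than by $\nabla\omega=0$), and the machinery you propose (the canonical Hermitian connection, expressing $d$ and $d^*$ through $\nabla$ plus torsion) is a legitimate route; but you then explicitly defer ``carrying out this torsion computation with the correct constants.'' That computation \emph{is} the proof; without it nothing beyond (i) and (iii) is established.

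A secondary soft spot: you assert that the identities (iv) for $\mu,\bmu$ are ``purely algebraic'' and checkable ``fiberwise from the standard commutation relations.'' Since $\mu$ is determined by $J$ alone while the hypothesis constrains $d\omega$, you must say whether the fiberwise identity $\br{\mu^*}{L}=\iu\bmu$ holds for an arbitrary value of the Nijenhuis tensor at a point or only under the pointwise constraint $\mu\omega=0$ (the $(3,0)$-part of $d\omega=0$, i.e.\ part of (iii)); as written, the ``standard commutation relations'' you invoke are not identified, and this is precisely where a sign or constant could fail. To complete the argument you would either carry out the connection-and-torsion computation in full, or reproduce Weil's original derivation; either way the decisive identity $\br{d^*}{L}=-d^c$ needs an actual proof rather than a plan.
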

    We generalize these identities to LCaK manifolds.  

    \begin{proposition}
    [The K\"{a}hler identities for LCaK manifolds]
    \label{KidforLCaK}
        Let $(M,J,g)$ be a $2n$-dimensional 
        LCaK manifold with the associated 
        fundamental form $\omega$
        and the Lee form $\theta$. 
        Then the following identities hold as operators 
        acting on $\mathcal{A}^j$:  
        \begin{align*}
    \textnormal{(i)} \quad &d_{k+1}L-Ld_{k}=0, & 
    \textnormal{(i)*} \quad &d^*_{k}\Lambda-\Lambda d^*_{k+1}=0, \\
    &\pa_{k+1}L-L\pa_{k}=0, & 
    &\pa^*_{k}\Lambda-\Lambda\pa^*_{k+1}=0, \\
    &\bpa_{k+1}L-L\bpa_{k}=0, & 
    &\bpa^*_{k}\Lambda-\Lambda\bpa^*_{k+1}=0, \\ 
    \textnormal{(ii)} \quad &d^*_{k+1}L-Ld^*_{k}= -d^c_{-n-k+j}, & 
    \textnormal{(ii)*} \quad &d_{k} \Lambda-\Lambda d_{k+1}= -d^{c*}_{-n-k+j}, \\ 
    &\pa^*_{k+1}L-L\pa^*_{k}= -\iu\bpa_{-n-k+j}, &
    &\pa_{k} \Lambda-\Lambda \pa_{k+1}= -\iu\bpa^*_{-n-k+j}, \\
    &\bpa^*_{k+1}L-L\bpa^*_{k}= \iu\pa_{-n-k+j}, &
    &\bpa_{k} \Lambda-\Lambda \bpa_{k+1}= \iu\pa^*_{-n-k+j}, \\
    \textnormal{(iii)} \quad &\br{\mu}{L}=\br{\bmu}{L}=0, & 
    \textnormal{(iii)*} \quad &\br{\mu^*}{\Lambda}=\br{\bmu^*}{\Lambda}=0, \\
    \textnormal{(iv)} \quad &\br{\mu^*}{L}= \iu\bmu, & 
    \textnormal{(iv)*} \quad &\br{\mu}{\Lambda}= \iu\bmu^*, \\ 
    &\br{\bmu^*}{L}= -\iu\mu, & 
    &\br{\bmu}{\Lambda}= -\iu\mu^*. 
\end{align*}
    \end{proposition}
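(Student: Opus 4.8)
The plan is to reduce each block of identities to the almost K\"ahler case (Proposition \ref{Kid}) by writing $d_k = d - k\wed{\theta}$ and $d^*_k = d^* - k\wed{\theta}^*$ and keeping track of the extra zeroth-order terms, exploiting that $L=\wed{\omega}$ and $\Lambda=L^*$ do not depend on $k$. The identities fall into three types. First, the purely algebraic ones, (iii), (iii)*, (iv), (iv)*, involve only $\mu,\bmu$ (and their adjoints) together with $L,\Lambda$; none of these operators depends on $k$ or on $d\omega$, and the proofs of these relations in Proposition \ref{Kid} are pointwise-linear-algebraic, reflecting the $\mathfrak{sl}(2)$-action on $\bigwedge T^*_xM$. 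Hence they transfer verbatim to the LCaK setting with nothing to add.

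Next I would prove (i) and its $\pa,\bpa$-refinements directly from the structure equation $d\omega=\theta\w\omega$. Since $\omega$ has even degree we have $\br{\wed{\theta}}{L}=0$, and a one-line computation gives $\br{d}{L}=\wed{\theta}L$; splitting into bidegrees (and using (iii)) yields $\br{\pa}{L}=\wed{\theta^{1,0}}L$ and $\br{\bpa}{L}=\wed{\theta^{0,1}}L$. Therefore
\[
d_{k+1}L-Ld_k=\br{d}{L}-\wed{\theta}L=0,
\]
and likewise for $\pa$ and $\bpa$. Taking formal adjoints, with $L^*=\Lambda$ and $(d_k)^*=d^*_k$, turns each of these into the corresponding identity in column (i)*.

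The main obstacle is (ii) and (ii)*, which is exactly where the degree-dependent twist $-n-k+j$ enters. Writing $d^*_k=d^*-k\wed{\theta}^*$ gives
\[
d^*_{k+1}L-Ld^*_k=\br{d^*}{L}-\wed{\theta}^*L-k\br{\wed{\theta}^*}{L},
\]
where $\br{\wed{\theta}^*}{L}=\wed{\theta^c}$ follows from $\wed{\theta}^*\omega=\theta^c$ (contract $\omega$ with the metric dual of $\theta$ and use $\omega=g(J\cdot,\cdot)$). So everything reduces to computing $\br{d^*}{L}$ in the LCaK setting, which, in contrast with the almost K\"ahler case, acquires $\theta$-corrections because $d^*$ and $\Lambda$ see the metric, hence $d\omega$. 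I expect the cleanest route to this computation is a conformal reduction: on a chart where $\theta=df$, the metric $g_0:=e^{-f}g$ has closed fundamental form $\omega_0=e^{-f}\omega$, so $(M,J,g_0)$ is almost K\"ahler and Proposition \ref{Kid} applies to its operators $L_0,\Lambda_0,d^*_0$. Substituting $L=e^fL_0$ and $*_g=e^{(n-j)f}*_{g_0}$ on $\mathcal{A}^j$, and differentiating the weight $e^{(n-j)f}$, should reproduce exactly the factor $-n+j$; together with the $-k$ coming from the twist this yields $-d^c_{-n-k+j}$. Since $d^c_m=\mathcal{J}^{-1}d_m\mathcal{J}$ and $\theta^c=-\mathcal{J}\theta$ are metric-independent, the right-hand side is unambiguous, and because every operator involved is local, the chartwise identity globalizes.

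I expect the genuine difficulty to be bookkeeping the conformal weight $n-j$ of the Hodge star correctly, equivalently recognizing it as the eigenvalue of $H=\br{L}{\Lambda}$ on $\mathcal{A}^j$, so that the coefficient of $\wed{\theta^c}$ comes out precisely as stated and the $+1$ shifts introduced by the various degree changes cancel. Once (ii) is in hand, (ii)* should follow either by formal adjunction or by conjugating with $*$, using $*L=\Lambda*$, $*\Lambda=L*$ (both immediate from $\Lambda=(-1)^{p+q}*L*$) and $d^*_k=-*d_{-k}*$ compatibly with Proposition \ref{Hodge}; indeed these same relations show that the starred and unstarred columns are Hodge-dual, so it suffices to establish one of them, with the delicate part again being the exact matching of the index $-n-k+j$ across the degree flip $j\mapsto 2n-j$ and the twist flip $k\mapsto -k$.
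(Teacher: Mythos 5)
Your proposal follows essentially the same route as the paper: (i) from the Leibniz rule applied to $d\omega=\theta\w\omega$, the starred column by formal adjunction, and the crucial identity (ii) by the conformal reduction $\tilde{g}=e^{-f}g$ on a chart where $\theta=df$, applying the almost K\"ahler identities to $(M,J,\tilde{g})$ and tracking the weight $e^{-(n-j)f}$ of the Hodge star together with the relation $\br{L}{\weda{\theta}}=-\wed{\theta^c}$; this is exactly the paper's computation, and it does produce the coefficient $-n-k+j$. The one inaccurate point is your treatment of (iii) and (iv): these are not pointwise-linear-algebraic facts independent of $d\omega$. Splitting the Leibniz rule by bidegree gives $\br{\mu}{L}=\wed{\mu\omega}$ with $\mu\omega=(d\omega)^{3,0}$, which is nonzero on a general almost Hermitian manifold, so (iii) genuinely uses that $d\omega=\theta\w\omega$ has no $(3,0)$ or $(0,3)$ component. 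The repair is already contained in your own argument (and is what the paper does): (iii) and (iv) are precisely the $(2,-1)$- and $(-1,2)$-bidegree components of the unrefined identities (i) and (ii), since $\wed{\theta}$, $\weda{\theta}$ and $\wed{\theta^c}$ only shift one of the two degrees by one.
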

    
    The K\"{a}hler identities for LCK manifolds, 
    which are the special case of the above proposition, 
    are obtained in \cite{MR3530921}. 
    Here we give a proof using conformal changes of 
    the operators. 
    Before the proof, 
    we give some definitions and basic facts. 
    Let $f$ be a real-valued function. 
    We consider the conformal change 
    $\tilde{g} \coloneqq e^{-f}g$ 
    and the fundamental form
    $\tilde{\omega} \coloneqq e^{-f}\omega$. 
    Denote by $\tilde{*}$ 
    the Hodge star operator with respect to the metric $\tilde{g}$. 
    It is straightforward to check that the equality 
    \begin{equation}\label{conf*}
        \tilde{*}=e^{-(n-j)f}*
    \end{equation} 
    holds as an operator acting on $\mathcal{A}^j$.
    We also define the operator 
    $d^{\tilde{*}} \coloneqq -\tilde{*}d\tilde{*}$ 
    which is the formal adjoint of $d$ with respect to $\tilde{g}$. 
    For a $1$-form $\alpha$ we define the operator 
    $\weda{\alpha} \coloneqq *\wed{\alpha}*$ 
    which is the formal adjoint operator of $\wed{\alpha}$ with 
    respect to $g$. 
    Taking the adjoints of the equality
    \[
    \br{d}{e^{-f}}=-e^{-f} \wed{df}, 
    \]
    we have
    \begin{equation}\label{[d,e]}
        \br{d^*}{e^{-f}}=e^{-f}\weda{df}. 
    \end{equation}
    The operator $\weda{\theta}$ can be written as  
    \[
    \weda{\theta} = \iota_{\theta^{\sharp}}, 
    \]
    where $\sharp$ denotes the musical isomorphism, 
    i.e. 
    $\theta^{\sharp}$ is the vector field determined by the 
    relation $g(\theta^{\sharp},\cdot)=\theta(\cdot)$ 
    (see \cite{MR3887684}). 
    Using this, 
    we have 
    \[
    \br{L}{\weda{\theta}} = \br{L}{\iota_{\theta^{\sharp}}}
    = -\wed{(\iota_{\theta^{\sharp}}\omega)}. 
    \]
    Combining this with the equation 
    \[
    \iota_{\theta^{\sharp}}\omega
    =\omega(\theta^{\sharp}, \cdot) 
    =-g(\theta^{\sharp},J\cdot)
    =-\theta\circ J
    =\theta^c, 
    \]
    we have the relation 
    \begin{equation}\label{[L,w]}
        \br{L}{\weda{\theta}}=-\wed{\theta^c}. 
    \end{equation}

    \begin{proof}[Proof of Proposition \ref{KidforLCaK}]
        The identities (i)*,(ii)*,(iii)* and (iv)* are obtained immediately 
        by taking adjoints of the identities (i),(ii),(iii) and (iv) respectively. 
        The first identity in (i) follows from $d_1 \omega =0$ and 
        the Leibniz rule (\ref{Leibniz}). 
        By splitting this identity into bidegrees,  
        we obtain (i) and (iii). 
        Similarly, 
        we obtain (ii) and (iv) by splitting the 
        first identity in (ii). 
        Therefore, 
        it is sufficient to prove the first identity in (ii). 
 
        Since $(M,J,g)$ is an LCaK manifold, 
        we can locally take a real-valued function $f$ such that 
        the metric $\tilde{g}=e^{-f}g$ endows $(M,J)$ with the K\"{a}hler structure. 
        The associated fundamental form of $(M,J,\tilde{g})$ 
        is $\tilde{\omega}=e^{-f}\omega$, 
        and we have $\theta = df$ by the uniqueness of the Lee form. 
        From Proposition \ref{Kid}, 
        we obtain 
        \[
        [d^{\tilde{*}},\tilde{L}]=-d^c, 
        \]
        where $\tilde{L} \coloneqq \wed{\tilde{\omega}}$. 
        From the equation (\ref{conf*}), 
        $d^{\tilde{*}}$ can be calculated as follows: 
        \begin{align*}
                d^{\tilde{*}}
                &= -\tilde{*}d\tilde{*} \\
                &= -\tilde{*}d(e^{-(n-j)f}*) \\
                &= -\tilde{*}e^{-(n-j)f}(-(n-j)\wed{\theta}*+d*) \\
                &= -e^{(n-(2n-j+1))}*e^{-(n-j)f}(-(n-j)\wed{\theta}*+d*) \\
                &= e^f(d^*+(n-j)\weda{\theta}). 
        \end{align*}
        Using this and the equation (\ref{[d,e]}), 
        we obtain 
        \begin{align*}
            d^{\tilde{*}} \tilde{L} 
            &= e^f(d^*(e^{-f}L)+(n-(j+2))\weda{\theta}e^{-f}L) \\
            &= e^f[d^*,e^{-f}]L+d^*L+(n-j-2)\weda{\theta}L \\
            &= d^*L+(n-j-1)\weda{\theta}L
        \end{align*}
        and 
        \begin{align*}
            \tilde{L}d^{\tilde{*}} 
            &= Ld^*+(n-j)L\weda{\theta}. 
        \end{align*}
        Hence we have 
        \begin{align*}
            -d^c 
            &= [d^{\tilde{*}},\tilde{L}] \\
            &= [d^*,L]+(n-j-1)\weda{\theta}L -(n-j)L\weda{\theta}. 
        \end{align*}
        Therefore, we obtain 
        \begin{align*}
            d^*_{k+1}L-Ld^*_{k} 
            &= (d^*-(k+1)\weda{\theta})L-L(d^*-k\weda{\theta}) \\
            &= [d^*,L]-(k+1)\weda{\theta}L+kL\weda{\theta} \\
            &= (-d^c-(n-j-1)\weda{\theta}L+(n-j)L\weda{\theta}) 
            -(k+1)\weda{\theta}L+kL\weda{\theta} \\
            &= -(d^c+(-n-k+j)[L,\weda{\theta}]). 
        \end{align*}
        From the equations (\ref{d^c_k}) and (\ref{[L,w]}), 
        we have 
        \begin{align*}
            d^c+(-n-k+j)[L,\weda{\theta}]
            &= d^c-(-n-k+j)\wed{\theta^c} \\
            &= d^c_{-n-k+j}. 
        \end{align*}
        This proves the proposition. 
    \end{proof}

\section{Twisted harmonic forms and the Lefschetz condition for 
LCS manifolds}
\label{secHLC}

\subsection{The generalized Lefschetz duality for LCaK manifolds}

    Let $(M,J,g)$ be a compact LCaK manifold with the Lee form $\theta$.
    We define the following subspaces of 
    harmonic forms: 
    \[
    \mathcal{H}^{p,q}_k \coloneqq 
    \mathcal{H}^{p+q}_k \cap \mathcal{A}^{p,q} 
    \quad \text{and} \quad
    \mathcal{H}^j_{k,\mathcal{J}} \coloneqq 
    \{ \alpha \in \mathcal{H}^{j}_{k} \,\mid\, 
    \mathcal{J}\alpha \in \mathcal{H}^{j}_{k} \}. 
    \]
    We have the following inclusion: 
    \[
    \bigoplus_{p+q=j} \mathcal{H}^{p,q}_k
    \subset
    \mathcal{H}^{j}_{k,\mathcal{J}}. 
    \]

    These spaces have the following duality relations:  

    \begin{proposition}\label{dual}
        Let $(M,J,g)$ be a $2n$-dimensional compact 
        LCaK manifold with the Lee form $\theta$. 
        For all $0 \le j,p,q$,  
        the following duality relations hold:
        \begin{itemize}
        \setlength{\itemsep}{5pt}
            \item[(i)] \textnormal{(The complex conjugation).} 
            Taking the complex conjugation induces isomorphisms
            \[
            \mathcal{H}^{p,q}_{k} 
            \, \stackrel{\sim}{\longrightarrow} \,  
            \mathcal{H}^{q,p}_{k} \quad \text{and} \quad
            \mathcal{H}^{j}_{k,\mathcal{J}} 
            \, \stackrel{\sim}{\longrightarrow} \, 
            \mathcal{H}^{j}_{k,\mathcal{J}}. 
            \]
            \item[(ii)] \textnormal{(The Hodge duality).}
            The Hodge star operator $*$ induces isomorphisms
            \[
            \mathcal{H}^{p,q}_{k} 
            \, \stackrel{\sim}{\longrightarrow} \,  
            \mathcal{H}^{n-q,n-p}_{-k} \quad \text{and} \quad
            \mathcal{H}^{j}_{k,\mathcal{J}} 
            \, \stackrel{\sim}{\longrightarrow} \,  
            \mathcal{H}^{2n-j}_{-k,\mathcal{J}} . 
            \]
        \end{itemize}
    \end{proposition}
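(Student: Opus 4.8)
The plan is to deduce both dualities from the way complex conjugation and the Hodge star interact with the twisted Laplacian, the bigrading, and the operator $\mathcal{J}$; the degree-and-twist shift required in (ii) is already supplied by Proposition \ref{Hodge}, so the only genuinely new work is to refine that isomorphism along the bidegree and to keep track of $\mathcal{J}$.

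For (i), the key observation is that every operator in sight commutes with complex conjugation. Since the Lee form $\theta$ is real, $\overline{\theta \w \alpha}=\theta \w \overline{\alpha}$, and $d$ commutes with conjugation, so $d_k=d-k\wed{\theta}$ does too. As the metric is real, the formal adjoint of any conjugation-commuting operator again commutes with conjugation; hence $d^*_k$ and $\Delta_k=d^*_kd_k+d_kd^*_k$ commute with conjugation, and conjugation therefore preserves $\mathcal{H}^j_k$. Because conjugation interchanges $\mathcal{A}^{p,q}$ and $\mathcal{A}^{q,p}$, it restricts to an isomorphism $\mathcal{H}^{p,q}_k \to \mathcal{H}^{q,p}_k$. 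For the $\mathcal{J}$-space I would verify the identity $\overline{\mathcal{J}\alpha}=\mathcal{J}\overline{\alpha}$: on $\mathcal{A}^{p,q}$ the left side equals $\overline{\iu^{p-q}}\,\overline{\alpha}=(-\iu)^{p-q}\overline{\alpha}$, while $\overline{\alpha}\in\mathcal{A}^{q,p}$ gives $\mathcal{J}\overline{\alpha}=\iu^{q-p}\overline{\alpha}$, and $(-\iu)^{p-q}=\iu^{q-p}$. Thus if $\alpha$ and $\mathcal{J}\alpha$ are harmonic, so are $\overline{\alpha}$ and $\mathcal{J}\overline{\alpha}=\overline{\mathcal{J}\alpha}$, which yields the second isomorphism; being an involution, conjugation is its own inverse.

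For (ii), Proposition \ref{Hodge} already provides an isomorphism $*\colon \mathcal{H}^j_k \to \mathcal{H}^{2n-j}_{-k}$. Since $*$ carries $\mathcal{A}^{p,q}$ to $\mathcal{A}^{n-q,n-p}$ and $2n-(p+q)=(n-q)+(n-p)$, restricting this isomorphism to forms of pure bidegree gives $*\colon \mathcal{H}^{p,q}_k \to \mathcal{H}^{n-q,n-p}_{-k}$. For the $\mathcal{J}$-space I would record the commutation $*\mathcal{J}=\mathcal{J}*$: both sides act on $\mathcal{A}^{p,q}$ as multiplication by $\iu^{p-q}$, since $*\alpha\in\mathcal{A}^{n-q,n-p}$ and $(n-q)-(n-p)=p-q$. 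Consequently, if $\alpha,\mathcal{J}\alpha\in\mathcal{H}^j_k$, then $*\alpha\in\mathcal{H}^{2n-j}_{-k}$ and $\mathcal{J}*\alpha=*\mathcal{J}\alpha\in\mathcal{H}^{2n-j}_{-k}$, so $*\alpha\in\mathcal{H}^{2n-j}_{-k,\mathcal{J}}$; the inverse map is $\pm*$.

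I expect no serious obstacle, as all the ingredients are pointwise linear-algebra identities on an almost Hermitian vector space together with Proposition \ref{Hodge}. The only places demanding care are the bookkeeping of the powers of $\iu$ in the identities $\overline{\mathcal{J}\alpha}=\mathcal{J}\overline{\alpha}$ and $*\mathcal{J}=\mathcal{J}*$, and the general principle that the formal adjoint of a conjugation-commuting operator again commutes with conjugation, which is what lets $\Delta_k$ descend through conjugation in part (i) without computing $d^*_k$ explicitly (note that, because $\weda{\theta}=*\wed{\theta}*$ carries no sign while $d^*=-*d*$ does, one should not expect a uniform formula $d^*_k=-*d_k*$).
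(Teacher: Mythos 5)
Your proof is correct and follows essentially the same route as the paper, which simply states that (i) holds because $\Delta_k$ and $\mathcal{J}$ are real operators and that (ii) follows from Proposition \ref{Hodge} together with the commutativity of $*$ and $\mathcal{J}$. Your write-up just fills in the details of those two sentences (including the correct observation that $d_k^*$ is \emph{not} $-*d_k*$, consistent with $*\Delta_k=\Delta_{-k}*$).
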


    \begin{proof}
        The duality relations (i) hold since $\Delta_k$ and $\mathcal{J}$ 
        are real operators. 
        The duality relations (ii) are obtained by Proposition \ref{Hodge} 
        and the commutativity between the operators
        $*$ and $\mathcal{J}$. 
    \end{proof}
    
    We next prove the generalized Lefschetz duality 
    for LCaK manifolds. 
    Firstly, 
    we prove the following 
    \begin{proposition}\label{LHinH}
        Let $(M,J,g)$ be a $2n$-dimensional 
        LCaK manifold with the Lee form $\theta$.  
        If $n+2k-j=0$,  
        we have $L \alpha \in \mathcal{H}^{j+2}_{k+1,\mathcal{J}}$ 
        and $\Lambda \alpha \in \mathcal{H}^{j-2}_{k-1,\mathcal{J}}$
        for all $\alpha \in \mathcal{H}^{j}_{k,\mathcal{J}}$.
    \end{proposition}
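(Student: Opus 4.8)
The plan is to prove the statement for $L$ directly from the K\"{a}hler identities of Proposition \ref{KidforLCaK}, and then to obtain the statement for $\Lambda$ formally from the $L$-case by conjugating with the Hodge star. The decisive observation is purely arithmetic: under the hypothesis $n+2k-j=0$ the twist parameter occurring in identity (ii) collapses, since $-n-k+j = (j-n)-k = 2k-k = k$. Hence, acting on $\mathcal{A}^j$, the relevant identities of Proposition \ref{KidforLCaK} read
\[
d^*_{k+1}L - Ld^*_k = -d^c_{k}, \qquad d_{k+1}L = Ld_k .
\]
Because $M$ is compact (as assumed throughout this subsection), a form is $d_k$-harmonic if and only if it is both $d_k$-closed and $d^*_k$-closed; so for $\alpha \in \mathcal{H}^j_{k,\mathcal{J}}$ I may use $d_k\alpha = d^*_k\alpha = 0$ together with $d_k\mathcal{J}\alpha = d^*_k\mathcal{J}\alpha = 0$.

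First I would verify that $L\alpha$ is $d_{k+1}$-harmonic. From the second identity, $d_{k+1}L\alpha = Ld_k\alpha = 0$. From the first, $d^*_{k+1}L\alpha = Ld^*_k\alpha - d^c_{k}\alpha = -d^c_{k}\alpha$; but $d^c_k = \mathcal{J}^{-1}d_k\mathcal{J}$, so $d^c_k\alpha = \mathcal{J}^{-1}d_k\mathcal{J}\alpha = 0$ since $\mathcal{J}\alpha$ is $d_k$-closed. Thus $L\alpha \in \mathcal{H}^{j+2}_{k+1}$. To upgrade this to membership in $\mathcal{H}^{j+2}_{k+1,\mathcal{J}}$ I would use that $L$ commutes with $\mathcal{J}$ (as $\omega$ has bidegree $(1,1)$, so $L$ preserves $p-q$), giving $\mathcal{J}L\alpha = L\mathcal{J}\alpha$; and since $\mathcal{J}^2 = (-1)^j\,\id$ on $\mathcal{A}^j$, the form $\mathcal{J}\alpha$ again lies in $\mathcal{H}^j_{k,\mathcal{J}}$. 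The computation just performed therefore applies verbatim to $\mathcal{J}\alpha$ and shows that $L\mathcal{J}\alpha = \mathcal{J}L\alpha$ is $d_{k+1}$-harmonic. Hence $L\alpha \in \mathcal{H}^{j+2}_{k+1,\mathcal{J}}$.

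For $\Lambda\alpha$ I would deliberately avoid a direct computation: repeating the argument above for $\Lambda$ would force the parameter $-n-(k-1)+j = k+1 \neq k$, so the correction term no longer vanishes. Instead I would conjugate the established $L$-case with the Hodge star, exploiting that the hypothesis $n+2k-j=0$ is invariant under $(j,k)\mapsto(2n-j,-k)$. By the Hodge duality of Proposition \ref{dual}(ii) we have $*\alpha \in \mathcal{H}^{2n-j}_{-k,\mathcal{J}}$, and $(2n-j,-k)$ satisfies the hypothesis since $n+2(-k)-(2n-j) = -(n+2k-j)=0$; so the $L$-case yields $L{*}\alpha \in \mathcal{H}^{2n-j+2}_{-k+1,\mathcal{J}}$. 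Applying Proposition \ref{dual}(ii) a second time gives ${*}L{*}\alpha \in \mathcal{H}^{j-2}_{k-1,\mathcal{J}}$. Since $\Lambda = (-1)^j{*}L{*}$ on $\mathcal{A}^j$, I conclude $\Lambda\alpha = (-1)^j\,{*}L{*}\alpha \in \mathcal{H}^{j-2}_{k-1,\mathcal{J}}$, completing the proof.

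The main obstacle is the bookkeeping of the twist parameters: one must recognize that the single hypothesis $n+2k-j=0$ is precisely what collapses the correction term $-d^c_{-n-k+j}$ of the K\"{a}hler identity to $-d^c_{k}$, which is then annihilated exactly by the supplementary condition that $\mathcal{J}\alpha$ be harmonic. The second delicate point is realizing that the $\Lambda$-statement cannot be reached by the same direct route (the parameter shifts to $k+1$) and must instead be deduced by Hodge-star conjugation, for which the invariance of the hypothesis under $(j,k)\mapsto(2n-j,-k)$ is the essential ingredient.
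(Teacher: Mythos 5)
Your proof is correct. The $L$-half is the same computation as the paper's: $d_{k+1}L\alpha = Ld_k\alpha = 0$ and $d^*_{k+1}L\alpha = -d^c_{-n-k+j}\alpha = -\mathcal{J}^{-1}d_k\mathcal{J}\alpha = 0$, using that $n+2k-j=0$ forces $-n-k+j=k$. You are in fact slightly more careful than the paper at one point: to land in $\mathcal{H}^{j+2}_{k+1,\mathcal{J}}$ rather than merely $\mathcal{H}^{j+2}_{k+1}$ one must also check that $\mathcal{J}L\alpha$ is $d_{k+1}$-harmonic, which you do by noting $\br{L}{\mathcal{J}}=0$ and $\mathcal{J}^2=(-1)^j\id$ and rerunning the computation on $\mathcal{J}\alpha$; the paper leaves this step implicit. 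For the $\Lambda$-half the two routes genuinely differ: the paper disposes of it with ``by taking adjoints of all operators'' (i.e.\ one dualizes the whole argument, applying the identities one degree lower, where the twist parameter $-n-(k-1)+(j-1)$ again equals $k$), whereas you conjugate the already-proved $L$-statement by the Hodge star via Proposition \ref{dual}(ii), using $\Lambda=(-1)^j*L*$ and the invariance of the constraint $n+2k-j=0$ under $(j,k)\mapsto(2n-j,-k)$. Both are legitimate one-line dualities; yours has the advantage of being completely explicit and of isolating exactly why a naive direct computation for $\Lambda$ on $\mathcal{A}^j$ fails (the correction term becomes $d^{c*}_{k+1}$, which is not killed by the hypotheses) --- precisely the pitfall that the paper's terse phrasing conceals.
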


    \begin{proof}
        By Proposition \ref{KidforLCaK}, 
        we obtain
        \[
        d_{k+1} L \alpha = L d_k \alpha = 0
        \] 
        and 
        \begin{align*}
            d^*_{k+1} L \alpha 
            &= -d^c_{-n-k+j} \alpha + L d^*_{k} \alpha \\
            &= -\mathcal{J}^{-1} d_{k-(n+2k-j)} \mathcal{J} 
            \alpha +0 \\
            &= -\mathcal{J}^{-1} d_k (\mathcal{J} \alpha) \\
            &=0. 
        \end{align*} 
        Therefore we have $L \alpha \in \mathcal{H}^{j+2}_{k+1,\mathcal{J}}$. 
        By taking adjoints of all operators, 
        we have 
        $\Lambda \alpha \in \mathcal{H}^{j-2}_{k-1,\mathcal{J}}$. 
    \end{proof}

    Since $\mathcal{H}^{p,q}_k \subset \mathcal{H}^{p+q}_{k,\mathcal{J}}$, 
    we have the following 
    \begin{corollary}
        If $n+2k-(p+q)=0$,  
        we have $L \alpha \in \mathcal{H}^{p+1,q+1}_{k+1}$ 
        and $\Lambda \alpha \in \mathcal{H}^{p-1,q-1}_{k-1}$ 
        for all $\alpha \in \mathcal{H}^{p,q}_k$.
    \end{corollary}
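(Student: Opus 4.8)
The plan is to deduce this directly from Proposition \ref{LHinH}, combined with the inclusion $\mathcal{H}^{p,q}_k \subset \mathcal{H}^{p+q}_{k,\mathcal{J}}$ noted just before the statement, and then to refine the conclusion by bidegree. First I would set $j = p+q$, so that the hypothesis $n+2k-(p+q)=0$ is exactly the condition $n+2k-j=0$ required to invoke Proposition \ref{LHinH}. Since $\alpha \in \mathcal{H}^{p,q}_k$ lies in $\mathcal{H}^{j}_{k,\mathcal{J}}$ by the inclusion, Proposition \ref{LHinH} applies and yields $L\alpha \in \mathcal{H}^{j+2}_{k+1,\mathcal{J}}$ and $\Lambda\alpha \in \mathcal{H}^{j-2}_{k-1,\mathcal{J}}$. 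In particular, forgetting the extra $\mathcal{J}$-invariance (which is legitimate since $\mathcal{H}^{j}_{k,\mathcal{J}} \subset \mathcal{H}^{j}_{k}$ by definition), the form $L\alpha$ is $d_{k+1}$-harmonic and $\Lambda\alpha$ is $d_{k-1}$-harmonic.

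The remaining step is to pin down the bidegrees. I would invoke the bidegree shifts of the Lefschetz operators recorded in Section \ref{secLef}: the operator $L = \wed{\omega}$ sends $\mathcal{A}^{p,q}$ into $\mathcal{A}^{p+1,q+1}$, and $\Lambda$ sends $\mathcal{A}^{p,q}$ into $\mathcal{A}^{p-1,q-1}$. Hence $L\alpha \in \mathcal{A}^{p+1,q+1}$ and $\Lambda\alpha \in \mathcal{A}^{p-1,q-1}$. Combining the harmonicity from the previous step with the fact that these forms have pure bidegree gives $L\alpha \in \mathcal{H}^{p+q+2}_{k+1} \cap \mathcal{A}^{p+1,q+1} = \mathcal{H}^{p+1,q+1}_{k+1}$ and $\Lambda\alpha \in \mathcal{H}^{p+q-2}_{k-1} \cap \mathcal{A}^{p-1,q-1} = \mathcal{H}^{p-1,q-1}_{k-1}$, where the final equalities are just the defining formula $\mathcal{H}^{a,b}_{m} = \mathcal{H}^{a+b}_{m} \cap \mathcal{A}^{a,b}$ read off for the shifted indices. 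This is precisely the assertion of the corollary.

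The hard part is essentially nonexistent: this corollary is a bookkeeping refinement of Proposition \ref{LHinH}, and all the analytic content already sits in that proposition and in the K\"{a}hler identities of Proposition \ref{KidforLCaK} used to prove it. The only points that genuinely need checking are that dropping the $\mathcal{J}$-condition is harmless and that the bidegree shifts of $L$ and $\Lambda$ are the ones recorded above; both are immediate. I therefore expect the proof to be a two-line argument citing Proposition \ref{LHinH} and intersecting with the appropriate $\mathcal{A}^{p,q}$.
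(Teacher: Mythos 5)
Your argument is correct and is exactly the paper's route: the paper derives this corollary in one line from Proposition \ref{LHinH} via the inclusion $\mathcal{H}^{p,q}_k \subset \mathcal{H}^{p+q}_{k,\mathcal{J}}$, leaving the bidegree bookkeeping implicit. Your write-up just makes explicit the steps (dropping the $\mathcal{J}$-condition, intersecting with $\mathcal{A}^{p\pm1,q\pm1}$) that the paper takes for granted.
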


    Combining these arguments with the Lefschetz decomposition on 
    the space of forms, 
    we obtain the following 

    \begin{theorem}
    [The hard Lefschetz duality for LCaK manifolds]
    \label{main}
        Let $(M,J,g)$ be a $2n$-dimensional 
        compact LCaK manifold with the Lee form $\theta$. 
        Then the operators 
        $\{ L, \Lambda, H\coloneqq \br{L}{\Lambda} \}$ 
        define finite 
        dimensional representations of $\mathfrak{sl}(2)$ on 
        \begin{equation}\label{slharm}
        \bigoplus_{p,q \ge 0} \mathcal{H}^{p,q}_{-(n-(p+q))/2} 
        \quad \text{and} \quad
        \bigoplus_{j \ge 0} \mathcal{H}^j_{-(n-j)/2, \mathcal{J}}. 
        \end{equation}
        Furthermore, 
        for all $0 \le p,q$ such that $j=p+q\le n$, 
        \[
        L^{n-j} \colon \mathcal{H}^{p,q}_{-(n-j)/2} 
        \, \stackrel{\sim}{\longrightarrow} \, 
        \mathcal{H}^{n-q,n-p}_{(n-j)/2}
        \]
        and 
        \[
        L^{n-j} \colon \mathcal{H}^j_{-(n-j)/2, \mathcal{J}} 
        \, \stackrel{\sim}{\longrightarrow} \,  
        \mathcal{H}^{2n-j}_{(n-j)/2, \mathcal{J}}
        \]
        are isomorphisms. 
    \end{theorem}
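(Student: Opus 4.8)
The plan is to exploit the fact, recorded in Section~\ref{secLef}, that $\{L,\Lambda,H=\br{L}{\Lambda}\}$ already define an $\mathfrak{sl}(2)$-representation on the full space of forms $\mathcal{A}^{\bullet}$, in which $H$ acts on $\mathcal{A}^{j}$ as the scalar $j-n$ (the normalization forced by $\br{H}{L}=2L$). Thus it suffices to show that the two subspaces in (\ref{slharm}) are finite-dimensional $\mathfrak{sl}(2)$-subrepresentations, and then to read off the isomorphisms from the representation theory of $\mathfrak{sl}(2)$. The whole argument hinges on the choice of parameter $k=-(n-j)/2$, which is exactly the relation $n+2k-j=0$ required by Proposition~\ref{LHinH}.

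First I would check invariance. Set $V\coloneqq\bigoplus_{p,q\ge 0}\mathcal{H}^{p,q}_{-(n-(p+q))/2}$ and $W\coloneqq\bigoplus_{j\ge 0}\mathcal{H}^{j}_{-(n-j)/2,\mathcal{J}}$. If $\alpha\in\mathcal{H}^{p,q}_{k}$ with $k=-(n-(p+q))/2$, then $n+2k-(p+q)=0$, so the Corollary to Proposition~\ref{LHinH} gives $L\alpha\in\mathcal{H}^{p+1,q+1}_{k+1}$ and $\Lambda\alpha\in\mathcal{H}^{p-1,q-1}_{k-1}$; since $k\pm 1=-(n-(p+q\pm 2))/2$, both land again in $V$. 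The same computation with Proposition~\ref{LHinH} itself shows $L$ and $\Lambda$ preserve $W$. As $H$ acts by the scalar $j-n$ on degree-$j$ forms it visibly preserves $V$ and $W$ as well, so each is stable under $\{L,\Lambda,H\}$ and is therefore a subrepresentation. Finite-dimensionality is immediate: on the compact manifold $M$ each twisted Laplacian $\Delta_{k}$ is elliptic, so every $\mathcal{H}^{p,q}_{k}$ and $\mathcal{H}^{j}_{k,\mathcal{J}}$ is finite-dimensional, and only finitely many of them ($0\le p,q\le n$, $0\le j\le 2n$) occur.

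Next I would invoke the standard hard Lefschetz statement for finite-dimensional $\mathfrak{sl}(2)$-modules: if $U=\bigoplus_{\lambda}U_{\lambda}$ is the weight decomposition with respect to $H$, then $L^{m}\colon U_{-m}\stackrel{\sim}{\longrightarrow}U_{m}$ is an isomorphism for every $m\ge 0$ (proved by decomposing $U$ into irreducibles). On $V$ and $W$ the weight of the degree-$j$ part is $j-n$, so taking $m=n-j\ge 0$ yields isomorphisms from the weight-$(j-n)$ space onto the weight-$(n-j)$ space. Tracking parameters, $L^{n-j}$ raises $k=-(n-j)/2$ to $(n-j)/2=-k$; this is precisely the relation $k=-(k+n-j)$ emphasised in the introduction, which makes the target Hodge-dual to the source in the sense of Proposition~\ref{Hodge}. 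For $W$ the weight-$(j-n)$ space is the single summand $\mathcal{H}^{j}_{-(n-j)/2,\mathcal{J}}$ and the weight-$(n-j)$ space is $\mathcal{H}^{2n-j}_{(n-j)/2,\mathcal{J}}$, so this directly gives the second claimed isomorphism.

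The one point requiring care---and the step I expect to be the main obstacle---is refining the weight-space isomorphism on $V$ to the asserted isomorphism on each individual bidegree $\mathcal{H}^{p,q}_{-(n-j)/2}$. Here I would use that $L$ shifts bidegree by $(1,1)$, so $L^{n-j}$ carries the $(p,q)$-summand of the weight-$(j-n)$ space $\bigoplus_{p+q=j}\mathcal{H}^{p,q}_{-(n-j)/2}$ into the $(n-q,n-p)$-summand of the weight-$(n-j)$ space; as $(p,q)$ ranges over $p+q=j$ the targets $(n-q,n-p)$ range bijectively over all admissible bidegrees of total degree $2n-j$. Thus $L^{n-j}$ is block-diagonal with respect to these matched bigraded decompositions, and since the total map is an isomorphism each block $L^{n-j}\colon\mathcal{H}^{p,q}_{-(n-j)/2}\to\mathcal{H}^{n-q,n-p}_{(n-j)/2}$ must itself be an isomorphism, completing the proof.
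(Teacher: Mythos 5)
Your proposal is correct, and its first half (showing that the two spaces in (\ref{slharm}) are finite-dimensional subrepresentations via Proposition \ref{LHinH}, its Corollary, and the relation $n+2k-j=0$) coincides with the paper's argument. For the isomorphism statement, however, you take a genuinely different route. You invoke the structure theory of finite-dimensional $\mathfrak{sl}(2)$-modules --- complete reducibility and the weight-space isomorphisms $L^{m}\colon U_{-m}\stackrel{\sim}{\to}U_{m}$ --- applied to the subrepresentations themselves, and then refine to individual bidegrees by observing that $L^{n-j}$ is block-diagonal with respect to the bijectively matched bigraded decompositions (a correct step: an isomorphism that maps each summand into its designated partner must restrict to an isomorphism on each block). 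The paper instead never leaves the level of forms: it uses the pointwise Lefschetz decomposition of Section \ref{secLef} to get injectivity of $L^{n-j}$ on $\mathcal{A}^{p,q}$, and then Proposition \ref{dual} (complex conjugation together with the Hodge star, which sends $\mathcal{H}^{p,q}_{k}$ to $\mathcal{H}^{n-q,n-p}_{-k}$) to conclude that source and target have the same finite dimension, whence injectivity forces bijectivity. Your approach buys independence from Proposition \ref{dual} and makes the representation-theoretic content of the duality explicit; the paper's approach is more elementary in that it needs only injectivity plus a dimension count and avoids appealing to complete reducibility of $\mathfrak{sl}(2)$-representations. Both are complete proofs.
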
 
    
    \begin{proof}
        As explained in Section \ref{secLef}, 
        $\{L, \Lambda, H\}$
        defines a representation of $\mathfrak{sl}(2)$ 
        on forms. 
        Since $n+2k-j=0$, 
        where $k=-(n-j)/2$, 
        $\{L, \Lambda, H\}$ defines a subrepresentation 
        on the spaces (\ref{slharm}) by 
        Proposition \ref{LHinH}. 
        This proves the first statement of the theorem. 
    
        We shall prove the second statement. 
        From the Lefschetz decomposition, 
        the operator 
        $L^{n-j}$ is injective as an operator acting 
        on $\mathcal{A}^{p+q}$. 
        Moreover, 
        by Proposition \ref{dual} 
        we have 
        $\dim \mathcal{H}^{p,q}_{-(n-j)/2} = 
        \dim \mathcal{H}^{n-q,n-p}_{(n-j)/2} < \infty$
        and 
        $\dim \mathcal{H}^j_{-(n-j)/2, \mathcal{J}} =
        \dim \mathcal{H}^{2n-j}_{(n-j)/2, \mathcal{J}} < \infty$. 
        So the maps are isomorphisms. 
    \end{proof}

    \begin{remark}
        The space of harmonic forms of bidegree $(p,q)$, 
        denoted by $\mathcal{H}^{p,q}$, 
        is defined 
        on almost K\"{a}hler manifolds in 
        \cite{MR4110721}. 
        Thus the space $\mathcal{H}^{p,q}_k$ can be regarded as a 
        generalization of the space $\mathcal{H}^{p,q}$ to LCaK manifolds. 
        On almost K\"{a}hler manifolds, 
        the relations 
        \[
        \mathcal{H}^{p,q}
        = \mathcal{H}^{p,q}_{\bpa} \cap \mathcal{H}^{p,q}_{\mu} 
        = \mathcal{H}^{p,q}_{\pa} \cap \mathcal{H}^{p,q}_{\bmu},  
        \]
        where
        $\mathcal{H}^{p,q}_{\delta}$ denotes a space 
        $\mathcal{A}^{p,q} \cap \ker \delta \cap \ker \delta^*$ 
        for an operator $\delta$, 
        hold \cite{MR4110721}. 
        However, 
        on LCaK manifolds, 
        the similar relations 
        do not hold in general. 
    \end{remark}

\subsection{Relations with the twisted cohomology 
and the hard Lefschetz condition for LCS manifolds}

    In this section, 
    we observe the relations of harmonic forms 
    $\mathcal{H}^{p,q}_{k}$ and 
    $\mathcal{H}^{j}_{k,\mathcal{J}}$
    with the twisted cohomology 
    of LCaK manifolds. 
    We define the Lefschetz map on the twisted 
    cohomology as follows: 
    \[
    [L] \colon H^j_{k}(M) \to H^{j+2}_{k+1}(M), 
    \quad 
    [\alpha] \mapsto [\omega \w \alpha]. 
    \]
    The map is well-defined since 
    the Leibniz rule (\ref{Leibniz}) holds. 
    We call the maps $[L]$ and $[L]^j$ also the Lefschetz maps. 
    The maps $[L]$ and $L$ are compatible with the following 
    natural inclusions: 
    \[
    \bigoplus_{p+q=j} \mathcal{H}^{p,q}_k
    \subset
    \mathcal{H}^{j}_{k,\mathcal{J}}
    \hookrightarrow
    H^{j}_k(M), 
    \]

    \begin{remark}
        On compact K\"{a}hler manifolds, 
        the above three spaces are isomorphic for each $j$. 
        In general, 
        however, 
        these inclusions are proper 
        even on compact CaK manifolds \cite{MR4110721}. 
        We give examples of compact LCaK manifolds with 
        the same property in Section \ref{secExamples}. 
    \end{remark}
    
    Considering the above, 
    we define the following condition 
    on compact LCS manifolds: 

    \begin{definition}[The hard Lefschetz condition for LCS manifolds]
    \label{HLCforLCS}
        Let $(M,\omega)$ be a $2n$-dimensional 
        compact LCS manifold with the Lee form $\theta$. 
        $(M,\omega)$ satisfies the
        \emph{hard Lefschetz condition} 
        (HLC) if the maps
        \[
        [L]^{n-j} \colon H^j_{-(n-j)/2}(M) 
        \to H^{2n-j}_{(n-j)/2}(M)
        \]
        are isomorphisms for all $1 \le j \le n$. 
    \end{definition}

    \begin{remark}
        If $(M,\omega)$ is a CS manifold, 
        we can take a global real-valued function $f$ on $M$ 
        such that $e^{-f}\omega$ is symplectic. 
        Then the above condition is equivalent to saying that 
        a symplectic manifold $(M,e^{-f}\omega)$ satisfies the HLC 
        in the classical sense. 
        In this sense, 
        the above condition is a generalization of the HLC for 
        symplectic manifolds to that for LCS manifolds. 
    \end{remark}

    \begin{remark}
        The HLC for LCS manifolds is suggested also in \cite{MR3746516}. 
        However, 
        an LCS manifold $(M,\omega)$ satisfies the HLC 
        defined in \cite{MR3746516} if and only if 
        $(M,\omega)$ is CS and 
        a symplectic manifold $(M,e^{-f}\omega)$ satisfies the HLC 
        in the classical sense. 
        So the HLC defined here is weaker than the one 
        defined in \cite{MR3746516}. 
    \end{remark}

    \begin{remark}
        The hard Lefschetz theorem 
        for Vaisman manifolds, 
        which are a special class of LCK manifolds, 
        is formulated in \cite{MR3885160}, 
        however, 
        the formulation has nothing to do 
        with the hard Lefschetz theorem formulated here 
        at least in the context of this paper. 
    \end{remark}

    Since the HLC for symplectic manifolds is always satisfied on 
    compact K\"{a}hler manifolds, 
    the following conjecture is naturally suggested: 

    \begin{conjecture}\label{HLTforLCK}
        Let $(M,J,g)$ be a $2n$-dimensional compact 
        LCK manifold with the Lee form $\theta$. 
        Then 
        $(M,J,g)$ satisfies the HLC. 
    \end{conjecture}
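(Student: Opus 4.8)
The plan is to leverage Theorem \ref{main}, which already delivers the hard Lefschetz isomorphism at the level of the harmonic spaces $\mathcal{H}^j_{-(n-j)/2,\mathcal{J}}$, and to upgrade it to an isomorphism of twisted cohomology groups. Set $k = -(n-j)/2$, so that the relation $-n-k+j = k$ holds and the Lefschetz map $[L]^{n-j}$ lands in $H^{2n-j}_{-k}(M)$. By the Hodge decomposition for $\Delta_k$ we have $\mathcal{H}^j_k \cong H^j_k(M)$ and $\mathcal{H}^{2n-j}_{-k} \cong H^{2n-j}_{-k}(M)$, and under these identifications $[L]^{n-j}$ is represented by $L^{n-j}$ on harmonic representatives whenever the latter preserves $d_k$-harmonicity. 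Since $L^{n-j}$ is already known to carry $\mathcal{H}^j_{-(n-j)/2,\mathcal{J}}$ isomorphically onto $\mathcal{H}^{2n-j}_{(n-j)/2,\mathcal{J}}$, the conjecture reduces to the single assertion that, on a compact LCK manifold,
\[
\mathcal{H}^j_{-(n-j)/2,\mathcal{J}} = \mathcal{H}^j_{-(n-j)/2} \quad\text{for all } j,
\]
that is, that $\mathcal{J}$ preserves $d_k$-harmonicity at the critical parameter, so that both inclusions $\bigoplus_{p+q=j}\mathcal{H}^{p,q}_k \subset \mathcal{H}^{j}_{k,\mathcal{J}} \hookrightarrow H^{j}_k(M)$ become equalities.

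Second, to prove that $\mathcal{J}$ preserves $d_k$-harmonicity I would show $[\Delta_k,\mathcal{J}] = 0$ on $\mathcal{A}^j$, which, since $\mathcal{J}$ acts by $\iu^{p-q}$ on $\mathcal{A}^{p,q}$, is equivalent to $\Delta_k$ preserving the $(p,q)$-bigrading. Here integrability enters decisively: on an LCK manifold $\mu = \bmu = 0$, so $d_k = \pa_k + \bpa_k$ and
\[
\Delta_k = \Delta_{\pa_k} + \Delta_{\bpa_k} + (\pa_k\bpa_k^* + \bpa_k^*\pa_k) + (\bpa_k\pa_k^* + \pa_k^*\bpa_k),
\]
where the first two summands have bidegree $(0,0)$ while the two cross terms have bidegrees $(1,-1)$ and $(-1,1)$. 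Thus bigrading-preservation is exactly the vanishing of the cross terms. I would derive this from the twisted K\"{a}hler identities of Proposition \ref{KidforLCaK}, substituting $\mu = 0$ and using the crucial fact that at $k = -(n-j)/2$ the shifted parameter $-n-k+j$ collapses to $k$, so that the identities close up with a single parameter and the computation parallels the classical Bochner--Kodaira derivation of $\Delta_d = 2\Delta_{\bpa}$ on K\"{a}hler manifolds.

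The main obstacle is that the twisted identities are not literally the classical ones: the expressions $\br{\Lambda}{\pa_k}$, $\br{\bpa_k^*}{L}$ and their conjugates all carry the anti-Lee contribution through $d^c_k = d^c - k\,\wed{\theta^c}$. Assembling the cross terms therefore produces residual commutators built from $\wed{\theta^c}$ and its adjoint $\weda{\theta^c}$ together with the differentials, and these are governed not by $d\theta$ (which vanishes) but by $d\theta^c$, which need \emph{not} vanish for a general LCK structure. Verifying that these Lee-form terms cancel pointwise --- or, failing that, that they act trivially on $d_k$-harmonic forms after integration by parts on the compact manifold --- is precisely the delicate point. I expect it to succeed automatically when the Lee form is parallel (the Vaisman case, where $\theta^c$ is closed), but in general the obstruction is a genuine curvature-type quantity; correspondingly, the naive local argument --- writing $\theta = df$ on a chart, so that $d_k = e^{kf}\,d\,e^{-kf}$ conjugates $\Delta_k$ to an ordinary Laplacian --- fails to globalize, because the accompanying conformal change $e^{-f}g$ needed to restore the K\"{a}hler condition is only defined locally. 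Resolving this cancellation is what keeps the conjecture open.
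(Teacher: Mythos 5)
There is no proof in the paper to compare against: the statement you are addressing is Conjecture \ref{HLTforLCK}, which the paper explicitly leaves open (``The above conjecture is still open''). Your proposal, to its credit, does not pretend otherwise; it is a reduction plus an honest identification of the missing step, and that missing step is exactly the open problem. The reduction itself is sound as a \emph{sufficient} condition: if $\mathcal{H}^j_{k,\mathcal{J}}=\mathcal{H}^j_k$ at the critical twist $k=-(n-j)/2$ for every $j\le n$, then Theorem \ref{main}, the Hodge isomorphism $\mathcal{H}^j_k\cong H^j_k(M)$, and Proposition \ref{dual} together yield the HLC of Definition \ref{HLCforLCS}, since $L^{n-j}$ then carries a full space of harmonic representatives isomorphically onto a full space of harmonic representatives. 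But be aware that this is only sufficient, not equivalent: the cohomological map $[L]^{n-j}$ could be an isomorphism without $\Delta_k$ preserving the bigrading, and the paper's $m=4$ example shows that $[L]^{n-j}$ can act nontrivially on classes \emph{not} represented in $\mathcal{H}^j_{k,\mathcal{J}}$. So ``the conjecture reduces to'' overstates the situation --- you have replaced the conjecture by a possibly strictly stronger statement.

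The genuine gap is the one you name yourself: you never establish $\br{\Delta_k}{\mathcal{J}}=0$, i.e.\ the vanishing of the cross terms $\pa_k\bpa_k^{*}+\bpa_k^{*}\pa_k$ and their conjugates, and your own analysis explains why the classical Bochner--Kodaira cancellation does not go through. In Proposition \ref{KidforLCaK} the identities shift the twist with the degree: on $\mathcal{A}^j$ one has $\bpa^{*}_{-n-l+j}=\iu(\pa_{l}\Lambda-\Lambda\pa_{l+1})$, and although the critical choice makes $-n-l+j=k$ collapse correctly in each fixed degree, the two substitutions needed for $\pa_k\bpa_k^{*}$ (in degree $j$) and $\bpa_k^{*}\pa_k$ (in degree $j+1$) involve $\pa$'s with \emph{different} twists, so the telescoping against $\pa_k\pa_k=0$ leaves residual terms built from $\wed{\theta^{1,0}}$, $\weda{\theta^{1,0}}$ and their derivatives; these are governed by $d\theta^c$ rather than $d\theta$ and do not vanish for a general LCK structure. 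You offer no argument that these residues cancel, and the paper offers none either. The correct conclusion is that your proposal is a reasonable programme consistent with the paper's framework, not a proof; the conjecture remains open.
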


    \begin{remark}
        On some LCS manifolds, 
        the cohomology class of $\omega$ vanishes, 
        i.e.  
        $[\omega]=0 \in H^2_1(M)$. 
        For instance, 
        LCS manifolds constructed from 
        contact manifolds canonically 
        \cite{MR0418003}, 
        Vaisman manifolds, 
        and LCK manifolds with potential 
        \cite{ornea2023principles}. 
        In these cases, 
        the Lefschetz map $[L]$ is a zero map. 
        So the HLC for LCS is equivalent to saying that 
        the vanishings
        \[
        H^j_{-(n-j)/2}(M)=0
        \]
        hold for all $1\le j \le n-1$. 
    \end{remark}

\section{Examples}\label{secExamples}

    As examples, 
    we give LCaK manifolds defined in Section $4$ of \cite{MR3763412}, 
    and compute the spaces defined in the previous sections 
    with their basises. 
    Furthermore, 
    we check that these manifolds do not satisfy the HLC. 
    The examples we give here 
    are solvmanifolds. 
    So before giving the examples, 
    we collect some facts about solvmanifolds and methods to 
    compute their twisted cohomologies. 

    Let $G$ be a simply connected Lie group 
    and $\mathfrak{g}$ be the associated Lie algebra. 
    The space $\bigwedge^{\bullet}\mathfrak{g}^*$ is identified 
    with left-invariant forms on $G$. 
    We take a left-invariant closed $1$-form $\theta \in \mathfrak{g}^*$. 
    Since the twisted differential $d_{\theta}$ 
    is left-invariant, 
    we obtain a complex 
    $(\bigwedge^{\bullet}\mathfrak{g}^*,d_{\theta})$ 
    called the \emph{twisted Chevalley-Eilenberg complex}. 
    Denote by $H^{\bullet}_{\theta}(\mathfrak{g^*})$ its cohomology groups. 
    A discrete subgroup $\Gamma$ of $G$ is called \emph{lattice} 
    if the quotient $\Gamma \backslash G$ is compact. 
    The following theorem gives a method to 
    compute the twisted cohomology of the manifold $\Gamma \backslash G$ 
    from the twisted Chevalley-Eilenberg complex 
    when $G$ is completely solvable, 
    i.e. 
    the endomorphisms $\ad_X$ of $\mathfrak{g}$ 
    have only real eigenvalues 
    for all $X \in \mathfrak{g}$. 

    \begin{theorem}[\cite{MR0124918}]
        Let $G$ be a simply connected and completely solvable Lie group 
        with a lattice $\Gamma$ and a left-invariant closed $1$-form $\theta$. 
        Then the natural inclusion $(\bigwedge^{\bullet}\mathfrak{g}^*,d_{\theta}) 
        \hookrightarrow 
        (\mathcal{A}^{\bullet}(\Gamma \backslash G),d_{\theta})$
        is a quasi-isomorphism, 
        i.e.  
        the inclusion induces an isomorphism in cohomology. 
   \end{theorem}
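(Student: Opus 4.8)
The plan is to recognize the statement as the local-system (twisted-coefficient) analogue of the Nomizu--Hattori theorem and to prove it by the same inductive fibration argument, with complete solvability entering exactly where it does in the untwisted case.

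First I would reinterpret both sides of the inclusion. Since $\theta$ is closed and left-invariant, it vanishes on $[\mathfrak{g},\mathfrak{g}]$, so it is a Lie algebra character; the invariant complex $(\bigwedge^{\bullet}\mathfrak{g}^*, d_{\theta})$ is then precisely the Chevalley--Eilenberg complex computing the Lie algebra cohomology $H^{\bullet}(\mathfrak{g}; \C_{\theta})$ with values in the one-dimensional module on which $X \in \mathfrak{g}$ acts by $-\theta(X)$. On the other hand, pulling back to the universal cover $G$, which is contractible, we may write $\theta = df$; multiplication by $e^{f}$ conjugates $d_{\theta}$ to $d$ and identifies $(\mathcal{A}^{\bullet}(\Gamma\backslash G), d_{\theta})$ with the de Rham complex of $M = \Gamma\backslash G$ with coefficients in the flat real line bundle $\mathcal{L}_{\theta}$ whose holonomy is the character $\chi_{\theta}\colon \Gamma \to \R_{>0}$, $\gamma \mapsto e^{\int_{\gamma}\theta}$. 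Thus the theorem asserts that invariant forms compute $H^{\bullet}(M; \mathcal{L}_{\theta})$.

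Next I would run the Mostow fibration argument. Choosing a codimension-one ideal $\mathfrak{h}\subset\mathfrak{g}$ (which exists because $\mathfrak{g}$ is solvable) and invoking Mostow's structure theory produces a closed normal subgroup together with a fiber bundle $F \to M \to B$ in which the fiber $F$ is a completely solvable solvmanifold of one lower dimension and the base $B$ is a circle. I would compare the Leray--Serre spectral sequence of this bundle, with coefficients in $\mathcal{L}_{\theta}$, against the Hochschild--Serre spectral sequence of the ideal $\mathfrak{h}\subset\mathfrak{g}$ on the invariant side. The inductive hypothesis identifies the fiber cohomology with invariant forms on $F$, and the base case is the circle: a direct Fourier computation shows that $H^{\bullet}_{\theta}(S^1)$ is computed by invariant forms for every real constant $\theta$ (both in the nondegenerate case, where everything vanishes, and in the degenerate case $\theta=0$). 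Compatibility of the two spectral sequences and a comparison of their $E_2$-pages then upgrades this to the desired isomorphism.

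The main obstacle is the fiberwise averaging step, that is, showing that projection onto fiber-invariant forms is a quasi-isomorphism for the twisted differential. Decomposing forms along the fiber into isotypic (weight) components, the non-invariant components should assemble into acyclic subcomplexes; the danger is that a non-real eigenvalue of some $\ad_X$ produces a resonance creating spurious cohomology in a nontrivial weight space. This is exactly the mechanism by which Nomizu's theorem fails for general solvmanifolds, and it is ruled out precisely by complete solvability: all eigenvalues of $\ad_X$ are real, and since the twisting character $\theta$ is itself real, every relevant weight stays real and the off-diagonal subcomplexes remain acyclic. Establishing this vanishing uniformly along the tower is the technical heart of the proof; once it is in place, the spectral-sequence bookkeeping is identical to the untwisted Hattori argument and yields that the inclusion is a quasi-isomorphism.
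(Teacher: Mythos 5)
The paper offers no proof of this statement: it is quoted directly from Hattori \cite{MR0124918}, so there is no internal argument to compare yours against. Your outline is essentially Hattori's original strategy --- translate the twisted complex into the de Rham complex with coefficients in the flat line bundle of holonomy $\gamma \mapsto e^{\int_{\gamma}\theta}$, identify the invariant complex with the Chevalley--Eilenberg complex of $\mathfrak{g}$ with values in the character determined by $\theta$, fibre $\Gamma\backslash G$ over lower-dimensional pieces, and compare the Leray--Serre and Hochschild--Serre spectral sequences, with complete solvability guaranteeing that all weights are real so that the nontrivial isotypic subcomplexes are acyclic. The base case on the circle and the identification of where complete solvability enters are both correct.

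The one step that does not work as written is the construction of the fibration. An arbitrary codimension-one ideal $\mathfrak{h}\subset\mathfrak{g}$ need not induce a fibre bundle on the compact quotient: for that the corresponding closed connected subgroup $H$ must be $\Gamma$-rational, meaning $\Gamma\cap H$ is a lattice in $H$ and $\Gamma H$ is closed in $G$, and a generic ideal fails this. The standard repair --- and what the cited reference actually does --- is to use subgroups that are automatically rational: Mostow's theorem gives the fibration of $\Gamma\backslash G$ over a torus with fibre the nilmanifold $(\Gamma\cap N)\backslash N$, where $N$ is the nilradical, and inside the nilpotent fibre Malcev's rational structure lets one refine the descending central series into a tower of circle bundles with $\Gamma$-rational stages. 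Note also that the base of the first fibration is a torus rather than a circle, so the induction has to be organized around these two kinds of steps. Once the fibration is built from $\Gamma$-rational subgroups, the rest of your argument (real weights, acyclicity of the non-invariant components, comparison of $E_2$-pages) goes through and reproduces the proof of the quoted theorem.
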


    From now on, 
    we only consider left-invariant forms. 
    So we write $\bigwedge^{j}\mathfrak{g}^*$ as 
    $\mathcal{A}^j$ by abuse of notation. 
    Let $m \ge 2$ be an integer. 
    We shall define a compact LCaK solvmanifold $(M_m,J,g)$ of dimension $2n=2m+2$. 
    
    We consider a Lie algebra 
    $\mathfrak{g}_m \coloneqq \R \ltimes_{A} \R^{2m+1}$, 
    where $A$ is given by the following diagonal matrix:
    \[
        A = \text{diag}  
        \Bigr(
            0, 
            \frac{1}{m}, 
            \frac{2}{m}, 
            \ldots, 
             1, 
             -\frac{1}{m}, 
             -\frac{2}{m}, 
             \ldots, 
             -1
        \Bigl). 
    \]
    Let $G_m$ be the associated simply connected Lie group. 
    It is proved in \cite{MR3763412} that the manifolds 
    $G_m$ are completely solvable, 
    and each of the manifolds admits at least one lattice $\Gamma_m$. 
    Thus we can compute the cohomologies of the compact manifolds  
    $M_m \coloneqq \Gamma_m \backslash G_m$ from the 
    associated Lie algebras. 
    We note that 
    the dimensions of $H^j(M_m)$ and $H^j_{\theta}(M_m)$ 
    are computed in \cite{MR3763412}.
    
    We take $(x^1, \cdots ,x^{m+1},y^1, \cdots ,y^{m+1})$ as 
    the standard basis of $\mathfrak{g}^*_m = (\R \ltimes_{A} \R^{2m+1})^*$. 
    We use the notation $x^{\alpha_1 \cdots \alpha_p}
    y^{\beta_1 \cdots \beta_q}$ for the form
    $x^{\alpha_1} \w \cdots \w x^{\alpha_p} 
    \w y^{\beta_1} \w \cdots \w y^{\beta_q}$. 
    The structure of $(\bigwedge^{\bullet}\mathfrak{g}^*_m,d)$ is 
    determined by the Leibniz rule and the following equations: 
    \begin{align*}
        dx^1 &=0, & dy^1 &=-x^1y^1, \\
        dx^2 &=0, & dy^2 &=(1/m)x^1y^2, \\
        dx^3 &=-(1/m)x^{13}, & dy^3 &=(2/m)x^1y^3,  \\ 
        dx^4 &=-(2/m)x^{14}, & dy^4 &=(3/m)x^1y^4, \\
        &\vdots & &\vdots \\
        dx^{m+1} &=-((m-1)/m)x^{1,m+1}, & dy^{m+1} &=x^1y^{m+1}. 
    \end{align*}
    
    We consider the following forms:  
    \[
    \omega \coloneqq \sum_{\alpha=1}^{m+1} x^{\alpha}y^{\alpha}, 
    \quad
    \theta\coloneqq \frac{1}{m} x^1. 
    \]
    It is straightforward to check that 
    the form $\omega$ endows $M_m$ with a left-invariant 
    LCS structure with the Lee form $\theta$. 
    Furthermore, 
    we give a left-invariant almost complex structure 
    $J$ to $M_m$ by considering 
    $z^{\alpha} \coloneqq x^{\alpha}+\iu y^{\alpha}$ as $(1,0)$-forms, 
    or equivalently, 
    by defining $\mathcal{J}$ as 
    \[
    \mathcal{J}x^{\alpha} = -y^{\alpha}, 
    \quad
    \mathcal{J}y^{\alpha} = x^{\alpha} 
    \]
    for all $1 \le \alpha \le m+1$. 
    Then $(x^1, \cdots ,x^{m+1},y^1, \cdots ,y^{m+1})$  is 
    an orthogonal basis of $\mathfrak{g}^*$ with respect to 
    the Riemannian metric $g(\cdot,\cdot) \coloneqq \omega(\cdot,J\cdot)$. 
    Given these structures, 
    $(M_m,J,g)$ is a compact LCaK manifold with the fundamental form 
    $\omega$ and the Lee form $\theta$. 
    We note that the manifolds $(M_m,J,g)$ are not LCK, 
    i.e. 
    $J$ is not integrable, 
    as the exterior derivative of 
    a $(1,0)$-form $z^2=x^2+\iu y^2$,  
    \begin{align*}
        dz^2 = d(x^2+\iu y^2) &= \frac{\iu}{m} x^1y^2 \\
        &= \frac{1}{4m}(z^1 z^2 - z^1 \conj{z}^2 
        + \conj{z}^1 z^2 - \conj{z}^1 \conj{z}^2), 
    \end{align*}
    is not contained in $\mathcal{A}^{2,0} \oplus \mathcal{A}^{1,1}$. 
    
    We shall compute the spaces 
    defined in the previous sections on these LCaK manifolds.  
    We fix the orthogonal basis of 
    $\bigwedge^j\mathfrak{g}^*_m$ as follows:
    \[
    \mathcal{B}^j \coloneqq 
    \{
    x^{\alpha_1 \cdots \alpha_p}y^{\beta_1 \cdots \beta_q} 
    \in 
    \sideset{}{^j}{\bigwedge}\mathfrak{g}^*_m
    \, \mid \, 
    1 \le \alpha_1<\cdots<\alpha_p \le m+1, 
    1 \le \beta_1<\cdots<\beta_q \le m+1
    \}. 
    \]
    We define the weight of each element of the basis 
    $W \colon \mathcal{B}^{\bullet} \to \R$
    as follows: 
    \begin{align*}
         W(x^1) &=0, & W(y^1) &=-m, \\ 
         W(x^2) &=0, & W(y^2) &=1, \\
         W(x^3) &=-1, & W(y^3) &=2, \\
         W(x^4) &=-2, & W(y^4) &=3, \\ 
         &\vdots & &\vdots \\ 
          W(x^{m+1}) &=-(m-1), & W(y^{m+1}) &=m, 
    \end{align*}
    \[
    W(x^{\alpha_1 \cdots \alpha_p}y^{\beta_1 \cdots \beta_q}) 
    =W(x^{\alpha_1})+ \cdots +W(x^{\alpha_p}) + 
    W(y^{\beta_1})+ \cdots +W(y^{\beta_q}), \quad 
    W(1)=0. 
    \]
    Using this, 
    we can calculate the twisted differential of 
    $\gamma \in \mathcal{B}^{\bullet}$ as follows: 
    \begin{equation}\label{weight}
        d_k(\gamma)= \frac{W(\gamma)-k}{m} x^1 \w \gamma. 
    \end{equation}
    
    We define the subspaces of $\mathcal{A}^j$ as follows: 
    \begin{align*}
    \mathcal{A}^j_k &\coloneqq 
    \left< \{ \gamma \in \mathcal{B}^j \, \mid \, W(\gamma)=k 
    \} \right>_{\C}, \\
    \mathcal{\hat{A}}^j_{k} &\coloneqq 
    \left< \{ \gamma \in \mathcal{B}^j \, \mid \, W(\gamma) \neq k, \, x^1 \w \gamma \neq 0 
    \} \right>_{\C}, 
    \end{align*}
    where $\left< S \right>_{\C}$ denotes a $\C$-vector space generated 
    by the elements of the set $S$. 
    Then we obtain the Hodge decomposition of the complex 
    $(\mathcal{A}^{\bullet},d_k)$: 
    \begin{proposition}
        The Hodge decomposition of the complex 
        $(\mathcal{A}^{\bullet},d_k)$ with respect to $g$ is 
        \[
        \mathcal{A}^j = \mathcal{A}^j_k \oplus 
        x^1 \mathcal{\hat{A}}^{j-1}_{k} \oplus 
        \mathcal{\hat{A}}^j_{k}. 
        \]
        More precisely, 
        \[
        \mathcal{H}^j_k = \mathcal{A}^j_k, \quad
        d_k \mathcal{A}^{j-1} = 
        x^1 \mathcal{\hat{A}}^{j-1}_{k} \quad \text{and} \quad
        d^*_k \mathcal{A}^{j+1} = 
        \mathcal{\hat{A}}^j_{k}, 
        \]
        where $x^1V$ denotes a vector space 
        $\{ x^1 \w v \, \mid \, v \in V \}$, 
        and $\mathcal{H}^j_k$ denotes the space 
        $\ker \Delta_k \cap \mathcal{A}^j$. 
    \end{proposition}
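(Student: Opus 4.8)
The plan is to reduce the entire statement to the explicit action of $d_k$ on the orthogonal monomial basis $\mathcal{B}^j$ recorded in (\ref{weight}), after which the decomposition falls out from elementary linear algebra in a finite-dimensional inner product complex.

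First I would sort $\mathcal{B}^j$ into three disjoint classes according to weight and to whether the factor $x^1$ occurs: class (1) consists of the $\gamma$ with $W(\gamma)=k$; class (2) of the $\gamma$ with $W(\gamma)\ne k$ and $x^1\w\gamma=0$ (equivalently, containing $x^1$); and class (3) of the $\gamma$ with $W(\gamma)\ne k$ and $x^1\w\gamma\ne 0$. Writing each class-(2) monomial uniquely as $x^1\w\gamma'$ with $\gamma'$ not containing $x^1$, and observing $W(x^1\w\gamma')=W(x^1)+W(\gamma')=W(\gamma')$, I identify the span of class (2) with $x^1\hat{\mathcal{A}}^{j-1}_k$; likewise class (1) spans $\mathcal{A}^j_k$ and class (3) spans $\hat{\mathcal{A}}^j_k$. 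Since distinct monomials are $g$-orthogonal (the basis $x^1,\dots,x^{m+1},y^1,\dots,y^{m+1}$ being $g$-orthogonal), this partition already furnishes the asserted orthogonal direct sum $\mathcal{A}^j=\mathcal{A}^j_k\oplus x^1\hat{\mathcal{A}}^{j-1}_k\oplus\hat{\mathcal{A}}^j_k$.

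Next I would read off the action of $d_k$ from (\ref{weight}). It annihilates classes (1) and (2)—the former because $W(\gamma)-k=0$, the latter because $x^1\w\gamma=0$—and it carries each class-(3) monomial $\gamma$ to the nonzero multiple $\frac{W(\gamma)-k}{m}\,x^1\w\gamma$ of a distinct class-(2) monomial. Hence $d_k$ restricts to a diagonal linear isomorphism $\hat{\mathcal{A}}^{j-1}_k\stackrel{\sim}{\longrightarrow}x^1\hat{\mathcal{A}}^{j-1}_k$, which yields simultaneously $\ker\bigl(d_k\colon\mathcal{A}^j\to\mathcal{A}^{j+1}\bigr)=\mathcal{A}^j_k\oplus x^1\hat{\mathcal{A}}^{j-1}_k$ and $\im\bigl(d_k\colon\mathcal{A}^{j-1}\to\mathcal{A}^j\bigr)=x^1\hat{\mathcal{A}}^{j-1}_k$, which is the second of the ``more precisely'' identities.

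Finally I would invoke the Hodge decomposition for a finite-dimensional cochain complex of inner product spaces, for which $\mathcal{H}^j_k=\ker d_k\cap\ker d_k^*$, $\ker d_k^*=(\im d_k)^\perp$, and $\im d_k^*=(\ker d_k)^\perp$. Combining these with the orthogonality of the three classes and the kernel and image computed above gives $\ker\bigl(d_k^*\colon\mathcal{A}^j\to\mathcal{A}^{j-1}\bigr)=(x^1\hat{\mathcal{A}}^{j-1}_k)^\perp=\mathcal{A}^j_k\oplus\hat{\mathcal{A}}^j_k$ and $\im\bigl(d_k^*\colon\mathcal{A}^{j+1}\to\mathcal{A}^j\bigr)=(\mathcal{A}^j_k\oplus x^1\hat{\mathcal{A}}^{j-1}_k)^\perp=\hat{\mathcal{A}}^j_k$, the third identity; intersecting the two kernels then leaves exactly $\mathcal{H}^j_k=\mathcal{A}^j_k$, the first identity. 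The one point deserving care is that I never compute $d_k^*$ explicitly—I use only the orthogonality relations $\ker d_k^*=(\im d_k)^\perp$ and $\im d_k^*=(\ker d_k)^\perp$—and that the adjoint relevant here is the algebraic adjoint on $\bigwedge^\bullet\mathfrak{g}_m^*$, which coincides with the $L^2$-adjoint on left-invariant forms because $G_m$, being solvable with a lattice, is unimodular. This identification is the only mildly technical step; everything else is bookkeeping against (\ref{weight}).
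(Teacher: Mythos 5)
Your proposal is correct and follows essentially the same route as the paper's proof: both read off the action of $d_k$ on the weighted monomial basis via (\ref{weight}), observe that $d_k$ kills $\mathcal{A}^j_k \oplus x^1\mathcal{\hat{A}}^{j-1}_k$ and restricts to an isomorphism $\mathcal{\hat{A}}^{j-1}_k \to x^1\mathcal{\hat{A}}^{j-1}_k$, and then conclude by orthogonality in the finite-dimensional complex. Your extra remark identifying the algebraic adjoint on $\bigwedge^\bullet\mathfrak{g}_m^*$ with the $L^2$-adjoint on left-invariant forms is a reasonable precaution but adds nothing essential, since the paper already works entirely within the left-invariant (Chevalley--Eilenberg) complex with the induced inner product.
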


    \begin{proof}
        It is obvious that the decomposition is orthogonal. 
        From (\ref{weight}), 
        it follows that 
        the space $\mathcal{A}^j_k \oplus 
        x^1 \mathcal{\hat{A}}^{j-1}_{k}$
        is contained in $\ker d_k$. 
        Moreover, the map 
        \[
        d_k \colon \mathcal{\hat{A}}^{j-1}_{k} \to 
        x^1 \mathcal{\hat{A}}^{j-1}_{k}
        \]
        is an isomorphism. 
        So we have $d_k(\mathcal{A}^{j-1})=x^1 
        \mathcal{\hat{A}}^{j-1}_{k}$. 
        Therefore, the space 
        $\mathcal{A}^j_k
        \oplus \mathcal{\hat{A}}^j_{k}$ 
        is contained in $\ker d^*_k$. 
        So we have $d^*_k(\mathcal{A}^{j+1})= 
        \mathcal{\hat{A}}^{j}_{k}$ and 
        $\mathcal{H}^j_k =
        \mathcal{A}^j_k$. 
    \end{proof}

    From this proposition, 
    we obtain

    \begin{proposition}
        The LCS manifolds $(M_m,\omega)$ do not satisfy the HLC for all $m \ge 2$. 
    \end{proposition}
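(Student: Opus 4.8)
The plan is to use the Hodge decomposition of $(\mathcal{A}^\bullet,d_k)$ obtained in the preceding proposition, which identifies the twisted cohomology with the weight space $H^j_k(M_m)\cong\mathcal{H}^j_k=\mathcal{A}^j_k$, and to show that for a suitably chosen degree $j$ the Lefschetz map of Definition \ref{HLCforLCS} fails to be injective. Since a non-injective linear map is never an isomorphism, exhibiting one nonzero class in its kernel suffices.

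First I would determine the effect of the Lefschetz map on harmonic representatives. Using the weight formula (\ref{weight}) together with the listed values of $W(x^\alpha)$ and $W(y^\alpha)$, one sees that $x^\alpha y^\alpha$ has weight $1$ for $\alpha\ge 2$ while $x^1y^1$ has weight $-m$, so $\omega=\eta+x^1y^1$ with $\eta\coloneqq\sum_{\alpha\ge 2}x^\alpha y^\alpha$ of pure weight $1$. Set $k=-(n-j)/2$ and take a harmonic $\alpha\in\mathcal{H}^j_k=\mathcal{A}^j_k$ of pure weight $k$. Since $(x^1y^1)^2=0$ and $2$-forms commute, $\omega^{n-j}=\eta^{n-j}+(n-j)\,\eta^{n-j-1}\wedge x^1y^1$, whence $\eta^{n-j}\wedge\alpha$ has pure weight $(n-j)/2$ while the remaining term has weight $-(n+j)/2\neq(n-j)/2$. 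As weight-$(n-j)/2$ monomials are exactly $\mathcal{A}^{2n-j}_{(n-j)/2}=\mathcal{H}^{2n-j}_{(n-j)/2}$, the harmonic projection of $\omega^{n-j}\wedge\alpha$ equals $\eta^{n-j}\wedge\alpha$. Therefore $[L]^{n-j}[\alpha]=[\eta^{n-j}\wedge\alpha]$, a class that vanishes precisely when the harmonic form $\eta^{n-j}\wedge\alpha$ is zero.

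Now $\eta$ is a symplectic form on the $2m$-dimensional space $V$ spanned by $x^2,\dots,x^{m+1},y^2,\dots,y^{m+1}$, so the classical linear-algebra hard Lefschetz theorem applies to $\eta$ on $\bigwedge^\bullet V$: the operator $\eta^{r}$ annihilates every $\eta$-primitive $i$-form once $r>m-i$. I would exhibit a nonzero harmonic $j$-form $\alpha\in\bigwedge^j V$ (containing neither $x^1$ nor $y^1$) that is $\eta$-primitive and has weight $-(n-j)/2$; then $\eta^{n-j}\wedge\alpha=0$ because $n-j=m+1-j>m-j$, producing the kernel class. For $m$ even take $j=1$ and $\alpha=x^{2+m/2}$, of weight $-m/2$, killed for the trivial degree reason $\eta^{m}\wedge x^{2+m/2}\in\bigwedge^{2m+1}V=0$. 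For $m$ odd take $j=2$ and a primitive two-form such as $\alpha=x^{a}x^{b}$ with $a+b=(m+7)/2$ and $2\le a<b\le m+1$, of weight $-(m-1)/2=-(n-j)/2$, which is annihilated by $\eta^{m-1}$.

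The main obstacle is the second step above: recognizing that, although $\omega$ is not itself $d_1$-harmonic, the harmonic projection of $\omega^{n-j}\wedge\alpha$ is governed solely by the transverse part $\eta^{n-j}$, the exact part $x^1y^1$ contributing only in strictly smaller weight. Once this weight bookkeeping is established the problem collapses to ordinary hard Lefschetz on $(V,\eta)$, and the only remaining care is the parity of $m$, which dictates the degree $j$ for which the source $H^j_{-(n-j)/2}(M_m)$ is nonzero and an explicit primitive form of the correct weight can be written down.
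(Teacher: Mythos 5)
Your proposal is correct and follows essentially the same route as the paper: the paper's proof simply exhibits the kernel classes $[x^{m'+2}]$ for $m=2m'$ and $[x^{2,m'+2}]$ for $m=2m'+1$, which are exactly the forms your construction produces (your family $x^ax^b$ with $a+b=(m+7)/2$ contains the paper's choice $a=2$, $b=m'+2$). Your weight bookkeeping reducing $[L]^{n-j}$ to $\eta^{n-j}$ on the transverse symplectic space $V$ correctly supplies the verification that the paper leaves implicit.
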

    \begin{proof}
    If $m=2m'$ is even, 
    a form $x^{m'+2} \in \mathcal{H}^1_{-m'}$ 
    is in the kernel of the Lefschetz operator, i.e. 
    $[L]^m([x^{m'+2}])=0 \in H^{2m+1}_{m'}(M_m)$. 
    If $m=2m'+1$ is odd, 
    a form $x^{2,m'+2} \in \mathcal{H}^2_{m'}$. 
    is in the kernel of the Lefschetz operator, i.e. 
    $[L]^{m-1}([x^{2,m'+2}])=0 \in H^{2m}_{m'}(M_m)$. 
    As a consequence, 
    the LCS manifolds $(M_m,\omega)$ do not satisfy the HLC for all $m \ge 2$. 
    \end{proof}
    
    Now we shall compute the basises of the spaces 
    $\mathcal{H}^j_{k}$, 
    $\mathcal{H}^j_{k,\mathcal{J}}$ 
    and $\mathcal{H}^{p,q}_k$ 
    where, 
    $n+2k-j=n+2k-(p+q)=0$ 
    for $m=2,3,4$. 
    We only consider the case $j \le n$ as the basises in higher degrees 
    are obtained by taking the Hodge dual. 
    We note that $\mathcal{H}^j_{k}$ is always zero if  
    $k$ is not integer or $j=0$. 

    \vspace{5pt}\paragraph{$\bullet \; \;  m=2$}\quad \vspace{5pt}
    
    The basis of each of the spaces are shown in Table \ref{m=2}. 
    We also have 
    $\mathcal{H}^{p,q}_k=\{0\}$ for all $p,q$. 
    The Lefschetz map 
    $[L]^2 \colon H^1_{-1}(M_2) \to H^5_{1}(M_2)$
    is as follows: 
    \[
    [x^3] \mapsto 0. 
    \]

    \vspace{5pt}\paragraph{$\bullet \; \;  m=3$}\quad \vspace{5pt}

    The basis of each of the spaces are shown in Table \ref{m=3}. 
    We also have 
    \begin{align*}
    \mathcal{H}^{1,1}_{-1} &= 
    \mathcal{H}^2_{\mathcal{J}} \cap \ker(\mathcal{J}-\id) \\
    &=\left< x^{13}+y^{13} \right>_{\C}, \\
    \mathcal{H}^{2,2}_{0} &= 
    \mathcal{H}^4_{\mathcal{J}} \cap \ker(\mathcal{J}-\id)
    \end{align*}
    \[
    =\left< x^{134}y^{4}+x^{4}y^{134}, x^{124}y^{3}+x^{3}y^{124}, 
    x^{123}y^{2}+x^{2}y^{123} \right>_{\C}, 
    \]
    and 
    $\mathcal{H}^{p,q}_k=\{0\}$ for all $p,q$ other than those above. 
    The Lefschetz map 
    $[L]^2 \colon H^2_{-1}(M_3) \to H^6_{1}(M_3)$ 
    on the elements which are not 
    represented by the elements in 
    $\mathcal{H}^2_{-1,\mathcal{J}}$ is as follows: 
    \begin{align*}
        [x^4y^2] &\mapsto 0, \\
        [x^{23}] &\mapsto 0.
    \end{align*}

    \vspace{5pt}\paragraph{$\bullet \; \;  m=4$}\quad \vspace{5pt}

    The basis of each of the spaces are shown in Table \ref{m=4}. 
    We also have $\mathcal{H}^{p,q}_k=\{0\}$ for all $p,q$. 
    The Lefschetz map 
    $[L]^4 \colon H^1_{-2}(M_4) \to H^9_{2}(M_4)$
    is as follows: 
    \[
    [x^4] \mapsto 0. 
    \]

    The Lefschetz map 
    $[L]^2 \colon H^3_{-1}(M_4) \to H^7_{1}(M_4)$ 
    on the elements which are not 
    represented by the elements in 
    $\mathcal{H}^3_{-1,\mathcal{J}}$ is as follows: 
    \begin{align*}
        [x^1y^{14}] &\mapsto 2[*(x^{45}y^5-x^{34}y^3-x^{24}y^2)], \\
        [x^{45}y^5] &\mapsto -2[*(x^1y^{14})], \\
        [x^{34}y^3] &\mapsto 2[*(x^1y^{14})], \\
        [x^{24}y^2] &\mapsto 2[*(x^1y^{14})], \\
        [x^{35}y^4] &\mapsto 0, \\
        [x^{25}y^3] &\mapsto 0.
    \end{align*}
    Therefore, 
    the image of this map is 
    \[
    \{ [\gamma] \in H^7_1(M_4) \, \mid \, 
    \gamma \in 
    \left< 
    *(x^{45}y^5-x^{34}y^3-x^{24}y^2), 
    *(x^1y^{14})
    \right>_{\C} \oplus 
    \mathcal{H}^7_{1,\mathcal{J}} \}. 
    \]

    \begin{table}[h!]
    \centering
    \caption{$m=2$}
    \label{m=2}
    \begin{tabular}{c|c||c|c}
        \hline
        \multicolumn{2}{c||}{space} &$\mathcal{H}^1_{-1}$ & $\mathcal{H}^3_{0}$ \\ \hline \hline
        
        \multicolumn{2}{c||}{$\dim$} &$1$ & $4$ \\ \hline
        
        \multirow{2}{*}{basis} &&$x^{3}$& $x^{23}y^{2}$, $x^{1}y^{13}$ \\ \cline{2-4}

        & \multirow{1}{*}{in $\mathcal{H}^{\bullet}_{\bullet,\mathcal{J}}$} &&$x^{13}y^{2} \pm \iu x^{2}y^{13}$ \\ \hline
    \end{tabular}
\end{table}
    \begin{table}[h!]
    \centering
    \caption{$m=3$}
    \label{m=3}
    \begin{tabular}{c|c||c|c}
        \hline
        \multicolumn{2}{c||}{space} &$\mathcal{H}^2_{-1}$ & $\mathcal{H}^4_{0}$ \\ \hline \hline
        
        \multicolumn{2}{c||}{$\dim$}&$4$&$10$ \\ \hline
        
        \multirow{3}{*}{basis} 
        &&$x^4y^2$, $x^{23}$& $x^{1}y^{123}$, $x^{12}y^{14}$, $x^{234}y^{4}$, $x^{34}y^{23}$ \\ \cline{2-4}

        &\multirow{2}{*}{in $\mathcal{H}^{\bullet}_{\bullet,\mathcal{J}}$}
        &$x^{13} \pm y^{13}$&$x^{134}y^{4} \pm x^{4}y^{134}$, 
        $x^{124}y^{3} \pm x^{3}y^{124}$ \\
        &&&$x^{123}y^{2} \pm x^{2}y^{123}$ \\ \hline
    \end{tabular}
\end{table}
    \begin{table}[h!]
    \centering
    \caption{$m=4$}
    \label{m=4}
    \begin{tabular}{c|c||c|c|c}
        \hline
        \multicolumn{2}{c||}{space} &$\mathcal{H}^1_{-2}$ & $\mathcal{H}^3_{-1}$ & $\mathcal{H}^5_{0}$ \\ \hline \hline
        
        \multicolumn{2}{c||}{$\dim$}
        &$1$ & $12$ & $24$ \\ \hline
        
        \multirow{7}{*}{basis} 
        &&$x^{4}$ &$x^{1}y^{14}$, $x^{45}y^{5}$&$x^{12}y^{124}$, $x^{13}y^{134}$, $x^{13}y^{125}$, $x^{14}y^{135}$ \\
        &&        &$x^{34}y^{3}$, $x^{24}y^{2}$                              &$x^{15}y^{145}$, $x^{234}y^{23}$, $x^{235}y^{24}$, $x^{245}y^{34}$ \\
        &&        &$x^{35}y^{4}$, $x^{25}y^{3}$                              &$x^{245}y^{25}$, $x^{345}y^{35}$                                   \\ \cline{2-5}

        &\multirow{4}{*}{in $\mathcal{H}^{\bullet}_{\bullet,\mathcal{J}}$}
        &&$x^{3}y^{15} \pm \iu x^{15}y^{3}$&$x^{1234}y^{4} \pm \iu x^{4}y^{1234}$, $x^{1235}y^{5} \pm \iu x^{5}y^{1235}$ \\
        &&&$x^{2}y^{14} \pm \iu x^{14}y^{2}$&$x^{23}y^{125} \pm \iu x^{125}y^{23}$, $x^{23}y^{134} \pm \iu x^{134}y^{23}$ \\
        &&&$x^{123} \pm \iu y^{123}$&$x^{24}y^{135} \pm \iu x^{135}y^{24}$, $x^{25}y^{145} \pm \iu x^{145}y^{25}$ \\
        &&&&$x^{34}y^{145} \pm \iu x^{145}y^{34}$ \\ \hline
    \end{tabular}
\end{table}

\FloatBarrier
\bibliographystyle{amsalpha}
\bibliography{M_paper}
\end{document}